\documentclass[11pt]{article}

\usepackage{amsfonts,amsmath,amsthm,amssymb}
\usepackage{pstricks,amsmath,here,latexsym,amssymb}

\usepackage{epsfig}
\usepackage{enumerate}
\usepackage{a4wide}
\usepackage{makeidx}
\usepackage{color}
\usepackage{psfrag}

\newcommand{\mC}{\mathcal{C}}
\newcommand{\mF}{\mathcal{F}}
\newcommand{\mG}{\mathcal{G}}

\newcommand{\N}{\mathbb{N}}

\newcommand{\Cat}{\textrm{Cat}}
\newcommand{\Ap}{\textrm{Ap}}
\newcommand{\Ex}{\textrm{Ex}}

\newcommand{\ite}{\noindent $\bullet~$}
\newcommand{\iten}{\noindent -~}

\newcommand{\eqexp}{\asymp_{\textrm{exp}}}

\newtheorem{thm}{Theorem}
\newtheorem{prop}[thm]{Proposition}
\newtheorem{claim}[thm]{Claim}
\newtheorem{lem}[thm]{Lemma}

\title{On the growth rate of minor-closed classes of graphs}
\author{Olivier Bernardi\footnote{CNRS, D\'{e}partement de Math\'{e}matiques
Universit\'{e} Paris-Sud 91405 Orsay Cedex, France. {\tt
olivier.bernardi@gmail.com} }, \and Marc Noy\footnote{Universitat
Polit\`{e}cnica de Catalunya, Jordi Girona 1--3, 08034 Barcelona,
Spain. {\tt marc.noy@upc.edu}}
 \and Dominic Welsh\footnote{University of Oxford, Mathematical Institute,
 24-29 St Giles', Oxford OX1 3LB, UK. {\tt dwelsh@maths.ox.ac.uk }}
}

\date{\today}

\begin{document}

\maketitle

\begin{abstract}
A minor-closed  class of graphs is a set of labelled graphs which
is closed under isomorphism and under taking minors. For a
minor-closed class $\mG$, we let $g_n$ be the number of graphs in
$\mG$ which have $n$ vertices. A recent result of Norine \emph{et
al.} \cite{Norine:small} shows that for all minor-closed class
$\mG$, there is a constant $c$ such that $g_n\leq c^n n!$. Our
main results show that the growth rate of $g_n$ is far from
arbitrary. For example, no minor-closed class $\mG$ has $g_n=
c^{n+o(n)} n!$ with $0<c<1$ or $1<c<\xi\approx 1.76$.
\end{abstract}

\section{Introduction}
In 1994,  Scheinerman and Zito \cite{Scheinerman:speed-hereditary}
introduced the study of  the possible growth rates of hereditary
classes of graphs (that is, sets of graphs which are closed under
isomorphism and induced subgraphs). Here we study the same problem
for classes which are closed under taking minors. Clearly, being
minor-closed is a much stronger property than to be  hereditary.
However, many  of  the  more  structured  hereditary  classes such
as  graphs embeddable  in a  fixed surface  or graphs of  tree
width bounded by a  fixed constant are minor-closed  and the
possible growth rates attainable are of independent  interest.

A broad  classification  of  possible growth rates for hereditary
classes given by  Scheinermann and Zito
\cite{Scheinerman:speed-hereditary} is into four  categories,
namely  constant, polynomial,  exponential and  factorial. This
has been considerably  extended  in a series of  papers by Balogh,
Bollobas  and  Weinrich
\cite{Bolagh:speed-hereditary,Bolagh:hereditary-penultimate,Bolagh:hereditary-Bell}
who  use  the  term \emph{speed} for  what we  call growth  rate.

A first and important  point to  note  is  that if a class of
graphs is  minor-closed then it is hereditary. Hence, in what
follows  we are working within the  confines described by the
existing classifications of  growth rates of hereditary  classes.
Working in this more restricted context, we obtain simpler
characterization of the different categories of growth rate and
simpler proofs.  This is done in Section
\ref{section:classification-thm}. In Section
\ref{section:growth-constants}, we establish some results about
the possible behaviour about classes in the most interesting range
of  growth rates,  namely  the  factorial range. We conclude by
listing some open questions in Section \ref{section:conclusion}.
%However,  being  minor-closed is a  much greater restriction  than  being  hereditary and consequently we can  obtain more detailed results in the  most  interesting range of  growth rates,  namely  the  factorial range.

A  significant  difference between hereditary and minor-closed classes is  due  to the following
recent result by Norine \emph{et al.} % which  shows that the growth of a proper minor-closed class is at most factorial.
A class is proper if it does not contain all graphs.

\begin{thm}[Norine et al. \cite{Norine:small}]\label{th:small}
If $\mathcal{G}$ is a proper minor-closed class of graphs then $g_n \le c^n n! $ for some constant $c$.
\end{thm}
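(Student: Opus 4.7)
\medskip
\noindent\textbf{Proof plan.} The plan is to reduce to $K_t$-minor-free graphs and then apply a recursive counting argument based on balanced separators. Since $\mG$ is proper, it excludes some graph $H$ as a minor; because $H$ embeds as a subgraph of $K_t$ with $t = |V(H)|$, the class $\mG$ is contained in the class of graphs with no $K_t$-minor, and it suffices to bound $g_n$ for this larger class.

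The key structural input I would use is the Alon--Seymour--Thomas separator theorem: every $n$-vertex $K_t$-minor-free graph contains a vertex set $S$ of size at most $\alpha\sqrt{n}$ (for a constant $\alpha = \alpha(t)$) such that $V(G) \setminus S$ admits a partition into $A \cup B$ with $|A|, |B| \leq 2n/3$ and no edges between $A$ and $B$. For $G \in \mG$ on vertex set $[n]$, pick such a decomposition; then $G$ is uniquely determined by the ordered triple $(S, A, B)$ together with the two labelled induced subgraphs $G[A \cup S]$ and $G[B \cup S]$, both of which are themselves $K_t$-minor-free. This yields
\[
g_n \;\leq\; \sum_{s + a + b = n}\binom{n}{s,a,b}\, g_{a+s}\, g_{b+s},
\]
where the sum is restricted to $s \leq \alpha\sqrt{n}$ and $a, b \leq 2n/3$.

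Setting $h_n := g_n/n!$, the inequality rearranges to $h_n \leq \sum \binom{a+s}{s}\binom{b+s}{s}\, s!\, h_{a+s}\, h_{b+s}$. Closing the induction naively with the hypothesis $h_k \leq c^k$ fails: the right-hand side picks up a factor of $\exp(O(\sqrt{n}\log n))$ coming from the $c^s$ slack, the multinomial $s!$ overcount, and the polynomial number of terms in the sum. This will be the main obstacle. I would overcome it by strengthening the inductive hypothesis to something of the form $h_n \leq c^n \exp(-\gamma\sqrt{n})$, so that the per-step savings absorb the blow-up introduced by the separator of size $O(\sqrt{n})$. Handling small $n$ via the trivial bound $g_n \leq 2^{\binom{n}{2}}$ and iterating then yields $g_n \leq c^n n!$ for a sufficiently large constant $c$, as desired.
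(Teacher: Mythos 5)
The paper does not prove Theorem~\ref{th:small}; it is quoted as a black box from Norine, Seymour, Thomas and Wollan. So there is no in-paper proof to compare against, and the question is only whether your separator-based argument is sound.

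The reduction to $K_t$-minor-free graphs, the use of the Alon--Seymour--Thomas separator, and the recurrence
\[
g_n \;\le\; \sum_{s+a+b=n}\binom{n}{s,a,b}\, g_{a+s}\, g_{b+s}
\qquad\text{(over $s\le\alpha\sqrt{n}$, $a,b\le 2n/3$)}
\]
are all correct. You correctly identify the obstacle: after dividing by $n!$ the per-step blow-up is $\binom{a+s}{s}\binom{b+s}{s}\,s!$, and you correctly size it at $\exp\bigl(\Theta(\sqrt{n}\log n)\bigr)$ in the worst case $s\approx\alpha\sqrt n$. However, the fix you propose does not close the gap. Strengthening the hypothesis to $h_n\le c^n\exp(-\gamma\sqrt n)$ buys you a per-step gain of
\[
\exp\!\bigl(-\gamma\bigl(\sqrt{a+s}+\sqrt{b+s}-\sqrt{n}\bigr)\bigr),
\]
and since $a+s,\,b+s$ are each between roughly $n/3$ and $2n/3$, the quantity $\sqrt{a+s}+\sqrt{b+s}-\sqrt n$ is $\Theta(\sqrt n)$ --- with an absolute constant, independent of $\gamma$. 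So the gain is $\exp(-\Theta(\gamma\sqrt n))$, which is asymptotically negligible against the $\exp(\Theta(\sqrt n\log n))$ blow-up no matter how large you choose the constant $\gamma$. The induction does not close.

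The repair is to strengthen the hypothesis one step further, to $h_n\le c^n\exp(-\gamma\sqrt n\log n)$ (equivalently $h_n\le c^n\,n^{-\gamma\sqrt n}$). The function $x\mapsto\sqrt{x}\log x$ is concave, and for a balanced split with $x+y=n+s$ and $x,y\in[n/3,\,2n/3+s]$ one gets
\[
\sqrt{x}\log x+\sqrt{y}\log y-\sqrt{n}\log n\;\ge\;\delta\sqrt{n}\log n-O(\sqrt n)
\]
for an absolute $\delta>0$. Now the per-step gain is $\exp(-\gamma\delta\sqrt n\log n+O(\gamma\sqrt n))$, which dominates the $\exp\bigl(\tfrac{3\alpha}{2}\sqrt n\log n+O(\sqrt n)\bigr)$ blow-up once $\gamma>3\alpha/(2\delta)$. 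With this amendment (and choosing $c$ large enough to handle a finite initial segment of $n$) your argument goes through. So the route is viable, but the specific inductive strengthening you wrote down is off by a $\log n$ factor in the exponent, and that factor is exactly what the separator size $\sqrt n$ generates.
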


\paragraph{Remark.} In contrast, a hereditary class such as the set of bipartite
graphs can have growth rate of order $2^{cn^2}$ with $c>0$.

We close this introduction with some definitions and notations. We
consider simple labelled graphs. The \emph{size} of a graph is the
number of vertices; graphs of size $n$ are labelled with vertex
set $\{1,2,\dots,n\}$. A \emph{class} of graphs is a family of
labelled graphs closed under isomorphism. For a class of graphs
$\mathcal{G}$, we let $\mathcal{G}_n$ be the graphs in
$\mathcal{G}$ with $n$ vertices, and we let $g_n
=|\mathcal{G}_n|$. The (exponential) \emph{generating function}
associated to a class $\mG$ is $G(z)=\sum_{n\geq  0}
\frac{g_n}{n!} z^n$.

The relation $H < G$ between graphs means \emph{$H$ is a minor of
$G$}. A family $\mathcal{G}$ is \emph{minor-closed} if $G \in
\mathcal{G}$ and $H<G$ implies $H \in \mathcal{G}$.  A class is
\emph{proper} if it does not contain all graphs. A graph $H$ is a
(minimal) \emph{excluded minor} for a minor-closed family
$\mathcal{G}$ if $H \not\in \mathcal{G}$ but every proper minor of
$H$ is in $\mathcal{G}$. We write $\mathcal{G} =\Ex(H_1,H_2,
\cdots)$ if $H_1,H_2,\dots$ are the excluded minors of
$\mathcal{G}$. By the theory of graph minors developed by
Robertson and Seymour \cite{Seymour:Graph-minors}, the number of
excluded minors is always finite.

\section{A classification theorem}\label{section:classification-thm}
% Before  stating  our main classification  theorem for  minor-closed classes we  emphasise  that  several parts  of  this result  follow from the  previous  work  on  hereditary  classes described  above.  However, there are differences and  the  proof  is shorter and  simpler  (as  of  course  it  should   be  as  it  is a far  less  general  result).  We  therefore  include  it  here.

Our classification theorem for the possible growth rate of minor-closed classes of graphs involves the following classes;
it is easy to check that they are all minor-closed.\\
\ite $\mathcal{P}$ is the class of \emph{path forests}: graphs whose connected components are paths.\\
\ite $ \mathcal{S}$ is the class of \emph{star forests}: graphs whose connected components are stars (this includes isolated vertices).\\
\ite $ \mathcal{M}$ is the class of \emph{matchings}: graphs whose connected components are edges and isolated vertices.\\
\ite $ \mathcal{X}$ is the class of \emph{stars}: graphs made of one star and some isolated vertices.

\begin{thm}\label{th:refine}
Let $\mG$ be a proper minor-closed family and let $g_n$ be the number of graphs in $\mathcal{G}$ with $n$ vertices.
\begin{enumerate}
\item If $\mG$ contains all the paths, then $g_n$ has \emph{factorial growth}, that is, \\
  $n!   \leq g_n  \leq  c^n n^! \textrm{ for some } c>1;$    \label{item:factorial}
\item else, if $\mG$ contains all the star forests, then $g_n$ has \emph{almost-factorial growth}, that is,\\
$B(n)  \leq  g_n \leq \epsilon^n n!~  \textrm{ for all } \epsilon>0$,  where $B(n)$ is the  $n^{\rm th}$ Bell number;
\item else, if $\mG$ contains all the matchings, then $g_n$ has \emph{semi-factorial growth}, that is, \\
$a^n n^{(1-1/k)n}   \leq g_n  \leq  b^n n^{(1-1/k)n}~\textrm{ for some integer } k\geq 2 \textrm{ and some } a,b>0;$
\item else, if $\mG$ contains all the stars, then   $g_n$ has \emph{exponential growth}, that is,\\
$2^{n-1} \leq g_n \leq c^n~ \textrm{ for some } c>2;$          \label{item:exponential}
%$n^a k^n \leq g_n \leq n^a k^n~ \textrm{ for some integer } k\leq 2 \textrm{ and some } a,b>0;$$
\item else, if $\mG$ contains all the graphs with a single edge, then   $g_n$ has \emph{polynomial growth}, that is,
 $g_n = P(n)~\textrm{ for some polynomial } P(n)  \textrm{ of degree at least 2  and } n \textrm{ sufficiently large};$
\item else, $g_n$ is \emph{constant}, namely
$g_n \textrm{ is equal to  0 or 1 for } n \textrm{ sufficiently large}.$
\end{enumerate}
\end{thm}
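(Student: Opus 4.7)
I would prove the six cases separately. In each case, a lower bound comes from counting the graphs guaranteed by the containment hypothesis, and an upper bound comes from the failure of the preceding case's hypothesis, which translates into an excluded minor.

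The lower bounds are direct. In case 1 the $n!/2$ labellings of $P_n$ lie in $\mathcal{P} \subseteq \mathcal{G}$. In case 2, the map sending each set partition of $[n]$ to the star forest centred at the minimum element of each block is an injection into $\mathcal{S} \subseteq \mathcal{G}$, giving $g_n \geq B(n)$. In case 4, the $2^{n-1}$ choices of a leaf set of a star centred at vertex $1$ give $g_n \geq 2^{n-1}$. Case 5 gets $g_n \geq \binom{n}{2}$ from the single-edge graphs, and case 6 is trivial. For case 3, matchings yield $g_n \geq a^n n^{n/2}$, which handles $k=2$; for larger $k$ the lower bound is obtained by exhibiting specific families of size-$k$ components actually present in $\mathcal{G}$.

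For the upper bounds, case 1 is Theorem~\ref{th:small}. Case 6 follows from minor-closure: a missing single-edge graph on $n_0$ vertices propagates, by deleting isolated vertices, to all $n \geq n_0$, leaving only the edgeless graph in $\mathcal{G}_n$. In case 5, the failures of the two stronger hypotheses give $K_{1,k_1} \notin \mathcal{G}$ (bounded maximum degree) and $k_2 \cdot K_2 \notin \mathcal{G}$ (bounded matching number), hence a bounded vertex cover; every graph in $\mathcal{G}_n$ then consists of a core on boundedly many non-isolated vertices together with isolated vertices, and summing over the finitely many admissible cores realises $g_n$ as a polynomial in $n$ of degree $\geq 2$. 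In case 4, the bounded matching number alone yields a vertex cover of some bounded size $c$, and counting graphs with vertex cover $\leq c$ gives the exponential bound $c^n$. In case 2, $\mathcal{P} \not\subseteq \mathcal{G}$ gives $P_k \notin \mathcal{G}$, whence bounded pathwidth by Robertson--Seymour; a careful counting over path-decompositions is needed to strengthen Theorem~\ref{th:small} into the uniform bound $g_n \leq \epsilon^n n!$ for every $\epsilon > 0$.

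Case 3 is the main obstacle. Here we have both $P_a \notin \mathcal{G}$ and a star forest $F \notin \mathcal{G}$; an Erd\H{o}s--P\'osa-type argument for stars implies that in every graph of $\mathcal{G}$ the high-degree vertices lie in a set of bounded size, and combined with the path exclusion (bounding the longest path in each component) this forces every connected component of a graph in $\mathcal{G}$ to have bounded size $k = k(\mathcal{G})$. Counting graphs whose components come from a fixed finite list of templates on $\leq k$ vertices then gives $g_n \leq b^n n^{(1-1/k)n}$ via the standard asymptotics of exponential generating functions of the form $\exp(Q(z))$ for a polynomial $Q$ of degree $k$. The most delicate step is matching this with the lower bound: one must identify a size-$k$ template whose disjoint unions genuinely lie in $\mathcal{G}$, so that the same integer $k$ governs both sides.
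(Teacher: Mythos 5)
Your high-level strategy matches the paper's: prove the six cases sequentially, with lower bounds from the containment hypothesis and upper bounds from an excluded minor coming from the failure of the previous hypothesis. Cases 1, 4, 5, 6 are essentially right and agree with the paper. However, the two hardest cases have genuine gaps.

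\textbf{Case 2 (almost-factorial).} You reduce to bounded pathwidth, but bounded pathwidth is not strong enough to give $g_n \le \epsilon^n n!$ for every $\epsilon>0$: the class of forests has pathwidth $1$, yet its growth constant is $e>0$. The right parameter is bounded \emph{tree-depth}, which does follow from excluding a path $P_k$. The paper realises this concretely via DFS spanning forests: if $G$ excludes $P_k$ then any DFS forest of $G$ has height $<k$, and every non-tree edge joins a vertex to one of its $\le k$ ancestors. Labelled rooted forests of bounded height have an entire exponential generating function, hence are $\le \epsilon^n n^n$ for all $\epsilon$, and the extra $2^{kn}$ factor for the back-edges is absorbed. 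Without passing to bounded height (tree-depth), your path-decomposition count cannot beat the factorial barrier.

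\textbf{Case 3 (semi-factorial).} You claim that the bound on high-degree vertices plus the path exclusion forces every connected component of a graph in $\mathcal{G}$ to have bounded size. This is false. Take $\mathcal{G}=\Ex(P_4,\,2K_{1,3})$: it contains all matchings (so it lands in case 3), yet it contains $K_{1,n}$ for every $n$, a single connected component of unbounded size, with one high-degree vertex and longest path of length $2$. The paper's fix is a separate comparison lemma: pass to the subclass $\mathcal{F}\subseteq\mathcal{G}$ of graphs with maximum degree $\le s$ (where $s$ is the size of the excluded star forest), prove $f_n \eqexp g_n$ by showing that at most $c$ vertices in any $G\in\mathcal{G}$ have degree $>s$ and deleting their incident edges, and only \emph{then} argue that components of graphs in $\mathcal{F}$ have bounded size (a spanning tree has bounded height and bounded degree, hence $\le s^p$ vertices). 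Finally, the integer $k$ in the exponent is defined as the largest size of a connected graph in $\mathcal{F}$ (not $\mathcal{G}$) with \emph{unbounded multiplicity}, and the matching upper and lower bounds both come from analysing $\exp(Q(z))$ where $Q$ is the degree-$k$ polynomial collecting those components. Your sketch of the lower bound already gestures at this, but without the $\mathcal{F}$-reduction, defining $k$ and proving the matching upper bound is not possible.
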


\paragraph{Remark.} As mentioned in the introduction, some of the results given by Theorem \ref{th:refine}
follow from the  previous  work  on  hereditary  classes. In
particular, the classification of growth between \emph{pseudo
factorial} (this includes our categories factorial,
almost-factorial and semi-factorial),  \emph{exponential},
\emph{polynomial} and \emph{constant} was proved by Scheinerman
and Zito in~\cite{Scheinerman:speed-hereditary}. A refined
description of the exponential growth category was also proved in
this paper (we have not included this refinement in our statement
of the classification Theorem~\ref{th:refine} since we found no
shorter proof of this result in the context of minor-closed
classes).  The refined descriptions of the semi-factorial and
polynomial growth categories stated in Theorem~\ref{th:refine}
were established in \cite{Bolagh:speed-hereditary}. Finally, the
\emph{jump} between the semi-factorial growth category and the
almost-factorial growth category was established in
\cite{Bolagh:hereditary-Bell}.

The rest of this section is devoted to the proof of Theorem
\ref{th:refine}. This proof is self-contained and does not use the
results from
\cite{Scheinerman:speed-hereditary,Bolagh:speed-hereditary,Bolagh:hereditary-penultimate,Bolagh:hereditary-Bell}.
We begin by the following easy estimates.

\begin{lem}\label{lem:estimates}
1. The number of path forests of size $n$ satisfies $|\mathcal{P}_n| \geq n!$.\\
2. The number of star forests of size $n$ satisfies  $|\mathcal{S}_n| \geq B(n)$.\\
% $B(n) \leq |\mathcal{S}_n| \leq 2^n B(n)$, where $B(n)$ is the  $n^{\rm th}$ Bell number.\\
3. The number of matchings of size $n$ satisfies $|\mathcal{M}_n| \geq n!!=n(n-2)(n-4)\ldots$.\\
4. The number of stars of size $n$ satisfies $|\mathcal{X}_n| \geq 2^{n-1}$.
\end{lem}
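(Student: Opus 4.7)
The plan is to prove each lower bound by exhibiting a sufficiently large family of graphs in the relevant class; in three of the four cases this is immediate, while part~1 requires a genuinely combinatorial injection.

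For part~1 I would use a cycle-cutting bijection. Starting from a permutation $\sigma\in S_n$, write its cycle decomposition; in each cycle, orient it starting from the smallest element and then remove the arc returning to that smallest element. The result is a path forest $F(\sigma)\in\mathcal{P}_n$ in which every path has its smallest vertex as an endpoint. The map $\sigma\mapsto F(\sigma)$ is injective because $\sigma$ can be recovered from $F(\sigma)$: orient each path from its smallest vertex, then close it back into a cycle by reattaching an arc to that smallest vertex. Hence $|\mathcal{P}_n|\geq n!$.

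For part~2 I would inject the $B(n)$ partitions of $[n]$ into $\mathcal{S}_n$ by turning each partition into the star forest whose blocks of size $\geq 2$ are realized as stars centred at the smallest vertex of the block (singleton blocks contribute isolated vertices). Distinct partitions give distinct underlying vertex partitions, hence distinct star forests, so $|\mathcal{S}_n|\geq B(n)$. For part~3, let $T(n):=|\mathcal{M}_n|$. Conditioning on whether vertex $n$ is isolated or matched yields the classical recurrence $T(n)=T(n-1)+(n-1)T(n-2)$. Since $T(n-1)\geq T(n-2)$ (any matching on $[n-2]$ extends to $[n-1]$ by adjoining an isolated vertex), this gives $T(n)\geq n\,T(n-2)$; iterating down to the base cases $T(0)=T(1)=1$ produces $T(n)\geq n(n-2)(n-4)\cdots=n!!$. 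Finally, for part~4, to each subset $S\subseteq\{2,\ldots,n\}$ I associate the graph on $[n]$ whose edges are $\{1,s\}$ for $s\in S$; this is always a (possibly degenerate) star centred at~$1$ together with isolated vertices, and distinct subsets give distinct edge sets, so $|\mathcal{X}_n|\geq 2^{n-1}$.

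The only delicate step is the construction in part~1: the naive map sending $\sigma$ to the Hamiltonian path with vertex sequence $\sigma(1),\sigma(2),\ldots,\sigma(n)$ is $2$-to-$1$ (a path and its reverse coincide as graphs) and yields only $n!/2$ Hamiltonian paths, so one cannot stay within Hamiltonian paths—one must use path forests with several components together with a canonical orientation, which is exactly what the cycle-cutting map provides. Parts~2, 3, and~4 are routine enumerations and present no real obstacle.
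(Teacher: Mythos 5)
Your proof is correct throughout, and parts 2--4 coincide with the paper's arguments almost exactly: you inject partitions of $[n]$ into $\mathcal{S}_n$ by centring each block at its smallest element (the paper phrases the same correspondence in the other direction, as star forests surjecting onto partitions), you derive $T(n)\geq n\,T(n-2)$ from the recurrence $T(n)=T(n-1)+(n-1)T(n-2)$ together with $T(n-1)\geq T(n-2)$ and iterate (the paper states the recurrence with an inconsequential slip, coefficient $n$ rather than $n-1$, but uses it in the same way), and you count stars centred at vertex $1$. The genuine difference is part~1. The paper's argument is a direct count of two disjoint subfamilies: there are $n!/2$ path forests consisting of a single Hamiltonian path and another $n!/2$ consisting of one isolated vertex plus a Hamiltonian path on the remaining $n-1$ vertices, and since these are distinguished by their number of components one gets $|\mathcal{P}_n|\geq n!$ for $n\geq 2$, the tiny cases being trivial. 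Your cycle-cutting injection $\sigma\mapsto F(\sigma)$ --- cut each cycle of $\sigma$ at its minimum, and recover $\sigma$ by orienting each path away from its smallest endpoint and closing it into a cycle --- achieves the same bound by an explicit embedding of $S_n$ into $\mathcal{P}_n$. This is a clean alternative that works uniformly for all $n$ and avoids having to observe that two separate counts happen to sum to $n!$; the paper's version is shorter and more elementary but slightly more ad hoc. Both are valid.
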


We recall that $\log(n!)=n\log(n)+O(n)$, $\log B(n) = n \log(n) - n \log(\log(n)) + O(n)$ and $\log(n!!)=n\log(n)/2+O(n)$.

\begin{proof}
1. The number of path forests of size $n\geq 2$ made of a single path is $n!/2$; the number of path forests of size $n\geq 2$ made of an isolated vertex and a path is $n!/2$.\\
%The number of ways of labeling a path on $n$ vertices is precisely $n!/2$.
2. A star-forest defines a partition of $[n]:=\{1,2,\dots,n\}$ (together with some marked vertices: the centers of the stars) and the partitions of $[n]$ are counted by the Bell numbers $B(n)$.\\
%with at most $n$ marked vertices (the centres of the stars). Now recall that $B(n)$ is the number of partitions of $[n]$. \\
3. The vertex $n$ of a matching of size $n$ can be isolated or joined to any of the $(n-1)$ other vertices, hence $|\mathcal{M}_n|\geq |\mathcal{M}_{n-1}|+n|\mathcal{M}_{n-2}|$. The property $|\mathcal{M}_n|\geq n!!$ follows by induction.\\
%The number of perfect matching on $2n$ vertices is equal to $(2n)!/(2^n n!)$; now substitute $n$ for $n/2$ and apply Stirling's estimate
4. The number of stars for which 1 is the center of the star is  $2^{n-1}$.
% (The total number of stars is $n2^{n-1}-n(n-1)/2-n+1$.)
\end{proof}

\noindent \textbf{Proof of Theorem \ref{th:refine}}  \\
\ite The lower bound for classes of graphs containing all paths
follows from Lemma~\ref{lem:estimates} while the upper bound
follows from Theorem \ref{th:small}.

\vspace{.2cm}

\ite The lower bound for classes of graphs containing all the star
forests but not all the paths follows from
Lemma~\ref{lem:estimates}. The upper bound is given by the
following Claim (and the observation that if a class $\mathcal{G}$
does not contain a given path $P$, then $\mathcal{G} \subseteq
\Ex(P)$).

\begin{claim}\label{claim:path}
For any path $P$, the growth rate of $\Ex(P)$ is bounded by $\epsilon^n n^n$ for all $\epsilon>0$.
\end{claim}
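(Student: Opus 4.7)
The plan is to reduce the problem to counting rooted forests of bounded depth, and then to bound that count via analytic combinatorics.

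Write $P=P_k$ for the path on $k$ vertices. Since a $P_k$-minor of $G$ is equivalent to a $P_k$-subgraph of $G$, the class $\Ex(P_k)$ is precisely the class of graphs with no simple path on $k$ vertices. Given such a $G$, a depth-first search on each component yields a spanning forest $F$; because every root-to-leaf branch of $F$ is a simple path of $G$, the depth in edges of $F$ is at most $d:=k-2$. In an undirected DFS every non-tree edge joins an ancestor-descendant pair, so $G$ is a subgraph of the closure $\overline{F}$. Since each vertex of a depth-$d$ rooted forest has at most $d$ proper ancestors, $\overline{F}$ has at most $dn$ edges, giving the encoding
\[
|\Ex(P_k)_n|\ \leq\ 2^{dn}\, f_d(n),
\]
where $f_d(n)$ denotes the number of rooted forests on $[n]$ of depth at most $d$.

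The heart of the argument is then to show that for each fixed $d$ and every $\epsilon>0$, $f_d(n) \leq \epsilon^n\, n!$ for all $n$ sufficiently large. Let $T_d(z)=\sum_{n} t_d(n)\, z^n/n!$ be the exponential generating function of rooted trees of depth at most $d$; splitting off the root yields the recursion $T_0(z)=z$ and $T_d(z)=z\exp(T_{d-1}(z))$ for $d\geq 1$. Since products and compositions of entire functions are entire, a straightforward induction on $d$ shows that each $T_d$, and hence also $F_d(z):=\exp(T_d(z))$, is an entire function of $z$, with $f_d(n)/n!=[z^n]F_d(z)$. Cauchy's inequality applied on the circle $|z|=1/\epsilon$ then gives $f_d(n)/n!\leq F_d(1/\epsilon)\cdot \epsilon^n$, and the constant $F_d(1/\epsilon)$ is independent of $n$: shrinking $\epsilon$ slightly at the outset absorbs it for $n$ large, giving the claim.

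Combining the two bounds yields $|\Ex(P_k)_n|\leq (2^d\epsilon)^n\, n!\leq (2^d\epsilon)^n\, n^n$ for $n$ large. Since $2^d$ depends only on $k$ while $\epsilon>0$ is arbitrary, this delivers the required bound $\epsilon^n n^n$ for every $\epsilon>0$. The only genuinely nontrivial step is recognizing that the generating functions $T_d$ are entire---this is what forces $f_d(n)/n!$ to decay faster than any geometric rate in $n$; the DFS encoding and the Cauchy estimate are both routine once that is in hand.
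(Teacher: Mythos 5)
Your proof is correct and follows essentially the same approach as the paper: take a DFS spanning forest, bound its height by a constant depending on $k$, use the DFS ancestor property to show the graph is recovered from the forest by a choice among at most $2^{O(kn)}$ edge sets, and invoke the fact that the exponential generating function for rooted forests of bounded height is entire. The only difference is that the paper cites \cite{Flajolet:analytic} for the subfactorial bound on bounded-height forests, while you derive it directly from the recursion $T_d=z\exp(T_{d-1})$ and Cauchy's inequality.
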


The proof of Claim \ref{claim:path} use the notion of
\emph{depth-first search spanning tree} (or \emph{DFS tree} for
short) of a graph. A DFS tree of a connected graph $G$ is a rooted
spanning tree obtained by a \emph{depth-first search algorithm} on
$G$ (see, for instance, \cite{Cormen:introduction-algo}). If $G$
is not connected, a choice of a DFS tree on each component of $G$
is a \emph{DFS spanning forest}. We recall that if $T$ is a DFS
spanning forest of $G$, every edge of $G$ which is not in $T$
joins a vertex of $T$ to one of its ancestors
(see~\cite{Cormen:introduction-algo}).

\begin{proof}
%Let $P$ be the path of size $k$. Let $G$ be a graph in $\Ex(P)$ and let $T$ be a depth-first spanning tree (DFS) of $G$.
Let $P$ be the path of size $k$. Let $G$ be a graph in $\Ex(P)$
and let $T$ be a DFS spanning forest  of $G$.
We wish to bound the number of pairs $(G,T)$ of this kind.\\
\ite First, the height of $T$ is at most $k-1$ (otherwise $G$
contains $P$). The number of (rooted labelled) forests of bounded
height is at most $\epsilon^n n^n $ for all $\epsilon>0$; this is
because the associated exponential generating function  is
analytic everywhere and hence has infinite radius of convergence
(see Section III.8.2 in \cite{Flajolet:analytic}).

\ite Second, since $T$ is a  DFS spanning forest, any edge in $G$
which is not in $T$ joins a vertex of $T$ to one of its ancestors.
Since the height of $T$ is at most $k-1$, each vertex has at most
$k$ ancestors, so can be joined to its ancestors in at most $2^k$
different ways. This means that, given $T$, the graph $G$ can be
chosen in at most $2^{kn}$ ways, and so the upper bound
$\epsilon^n n^n$ for all $\epsilon>0$ holds for the number of
pairs $(G,T)$.
\end{proof}

\vspace{.5cm}

\ite We now consider minor-closed classes which do not contain all
the paths nor all the star forests. Given two sequences
$(f_n)_{n\in \N}$ and $(g_n)_{n\in \N}$, we write $f_n \eqexp g_n$
if there exist $a,b>0$ such that $f_n\leq a^ng_n$ and $g_n\leq
b^nf_n$. Observe that if $\mG$ contains all the matchings, then
$g_n\geq n!!\eqexp n^{n/2}$ by Lemma~\ref{lem:estimates}. We prove
the following more precise result.

\begin{claim}\label{claim:pseudo-factorial}
Let $\mG$ be a minor-closed class containing all matchings but not containing all the paths nor all the star forests. Then, there exists an integer $k\geq 2$ such that $g_n\eqexp n^{(1-1/k)n}$.
\end{claim}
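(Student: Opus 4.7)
The plan is to identify a distinguished integer $k \ge 2$ from the structural constraints on $\mG$ and then prove matching lower and upper bounds on $g_n$. For each $j \ge 2$, let $\mathcal{P}^{(j)}$ denote the (minor-closed) class of path forests whose connected components each have at most $j$ vertices, so that $\mathcal{P}^{(2)} = \mathcal{M}$. Since $\mG$ contains all matchings, $\mathcal{P}^{(2)} \subseteq \mG$; since $\mG$ does not contain all of $\mathcal{P}$, some path $P_N \notin \mG$, so minor-closure gives $\mathcal{P}^{(j)} \not\subseteq \mG$ for all $j \ge N$. I define $k$ to be the largest integer with $\mathcal{P}^{(k)} \subseteq \mG$; then $2 \le k < \infty$.

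For the lower bound, the exponential generating function of $\mathcal{P}^{(k)}$ is $\exp(Q_k(z))$, where $Q_k(z) = z + \tfrac{1}{2}\sum_{j=2}^{k} z^j$ is the EGF of a single connected path on at most $k$ labelled vertices. Standard saddle-point asymptotics for the coefficients of the exponential of a polynomial of degree $k$ (cf.\ Flajolet--Sedgewick, Section VIII.5) give $|\mathcal{P}^{(k)}_n| \eqexp n^{(1-1/k)n}$, and therefore $g_n \ge a^n n^{(1-1/k)n}$ for some $a>0$.

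For the upper bound we must use both forbidden structures simultaneously. By the choice of $k$, some path forest in $\mathcal{P}^{(k+1)} \setminus \mathcal{P}^{(k)}$ is excluded from $\mG$; after enlarging the forbidden minor we may assume $\mG \subseteq \Ex(m\cdot P_{k+1})$ for some integer $m$, where $m\cdot P_{k+1}$ denotes $m$ disjoint copies of the $(k+1)$-vertex path. Similarly, the excluded-star-forest hypothesis yields $\mG \subseteq \Ex(m'\cdot K_{1,d})$ for some $m',d$. I would then refine the DFS spanning forest argument of Claim~\ref{claim:path}: every $G\in\mG$ admits a DFS spanning forest $T$; excluding $m\cdot P_{k+1}$ bounds the number of long root-to-leaf branches of $T$, while excluding $m'\cdot K_{1,d}$ bounds the number of high-branching vertices. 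Counting pairs $(G,T)$ under these joint restrictions, together with the back-edge enumeration of Claim~\ref{claim:path}, should deliver $g_n \le b^n n^{(1-1/k)n}$.

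The main obstacle is extracting precisely the exponent $1-1/k$ in the upper bound. Neither forbidden minor alone suffices: the excluded path forest by itself gives only the coarse $\epsilon^n n^n$ estimate of Claim~\ref{claim:path}, while the excluded star forest alone permits unboundedly many high-degree ``hub'' configurations. The delicate step is a joint amortisation showing that each surplus of depth in $T$ must be compensated by a deficit of branching, so that the effective combinatorial shape of $G$ mimics a disjoint union of paths on at most $k$ vertices---exactly the class $\mathcal{P}^{(k)}$ that witnesses the lower bound.
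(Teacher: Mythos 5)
Your definition of $k$ is wrong, and this is a genuine gap, not a detail. You set $k$ to be the largest integer with $\mathcal{P}^{(k)}\subseteq\mG$, i.e.\ you only look at which \emph{paths} survive with unbounded multiplicity. But the exponent is controlled by the largest \emph{arbitrary} connected graph that occurs with unbounded multiplicity, and that graph need not be a path. Concretely, take $\mG$ to be the class of graphs whose connected components are (connected) minors of $K_{1,3}$; this is minor-closed, contains all matchings, excludes $P_4$ and $K_{1,4}$, so it falls under Claim~\ref{claim:pseudo-factorial}. The connected members of $\mG$ are $K_1,K_2,P_3,K_{1,3}$, all with unbounded multiplicity, and the EGF $\exp\bigl(z+\tfrac12 z^2+\tfrac12 z^3+\tfrac16 z^4\bigr)$ gives $g_n\eqexp n^{3n/4}$, i.e.\ $k=4$. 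Your definition yields $k=3$ (since $P_4\notin\mG$) and would predict $n^{2n/3}$, which is false; and consequently the upper bound you propose to extract from $\mG\subseteq\Ex(m\cdot P_{k+1})$ cannot hold. The paper instead first passes to the subclass $\mF\subseteq\mG$ of graphs of maximum degree at most $s$ (using the excluded star forest to show $g_n\eqexp f_n$), then shows connected components of $\mF$ have size $\le s^p$ (height $<p$ from the excluded path, degree $\le s$), and only then defines $k$ as the largest size of a connected graph in $\mF$ with unbounded multiplicity. This makes the upper bound a one-line EGF argument ($\mF$ is contained in a class whose EGF is $P(z)\exp(Q(z))$ with $\deg Q=k$), rather than the unresolved DFS ``joint amortisation'' you leave as a sketch. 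Even setting aside the wrong $k$, your upper bound is not a proof: it ends with ``should deliver,'' and the delicate step you correctly identify as the crux is exactly what is missing.
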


\paragraph{Remark.} For any integer $k\geq 2$, there exists a minor-closed class of graphs $\mG$ such that $g_n\eqexp n^{(1-1/k)n}$. For instance, the class $\mG$ in which the connected components have no more than $k$ vertices satisfies this property (see Lemma \ref{lem:unbounded-multiplicity} below).

\begin{proof}
Let $\mG$ be a minor-closed class $\mG$ containing all matchings but not a given path $P$ nor a given star forest $S$. We denote by $p$ and $s$ the size of $P$ and $S$ respectively. Let $\mF$ be set of graphs in $\mathcal{G}$ such that every vertex has degree at most $s$. The following lemma compares the growth rate of $\mF$ and $\mG$.

\begin{lem}
The number $f_n$ of graphs of size $n$ in $\mF$ satisfies $f_n\eqexp g_n$.
\end{lem}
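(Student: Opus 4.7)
The plan is to prove $g_n \le b^n f_n$ for some $b > 0$, the reverse inequality $f_n \le g_n$ being immediate from $\mF \subseteq \mG$. I would build a map $\phi : \mG_n \to \mF_n$ with exponentially bounded preimages, where $\phi(G)$ simply deletes all edges incident to the high-degree vertices of $G$; the content of the argument is then a structural bound on the number of such vertices.

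The structural input is as follows. Write the excluded star forest as $S = K_{1,a_1}\sqcup \cdots \sqcup K_{1,a_t}\sqcup I$, with $t\geq 1$ non-trivial stars (the $a_j\geq 1$) and $I$ a set of $i_0 = s - t - \sum_j a_j$ isolated vertices. I claim that any $G\in\mG_n$ with $n\geq s$ has strictly fewer than $t$ vertices of degree at least $s$. Otherwise, fix $t$ such vertices $v_1,\dots,v_t$ and construct greedily, for $i=1,\dots,t$, a set $L_i$ of $a_i$ neighbours of $v_i$ disjoint from $\{v_1,\dots,v_t\}$ and from $L_1\cup\cdots\cup L_{i-1}$. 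At step $i$ the forbidden set has size $(t-1)+\sum_{j<i}a_j$, and using the identity $\sum_j a_j = s - t - i_0$ this is at most $s-1-a_i$; since $v_i$ has at least $s$ neighbours, at least $a_i+1$ of them remain available. Padding the resulting family of $t$ vertex-disjoint stars with $i_0$ unused vertices of $G$ then exhibits $S$ as a subgraph of $G$, hence as a minor, contradicting $S\notin\mG$.

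With this bound in hand, set $c=t-1$, let $V_H(G)$ be the set of vertices of $G$ of degree at least $s$ (so $|V_H(G)|\leq c$ whenever $n\geq s$), and define $\phi(G)$ as the graph obtained from $G$ by deleting every edge incident to $V_H(G)$. Then $\phi(G)$ is a subgraph of $G$, so it lies in $\mG$; moreover each $v\in V_H$ becomes isolated and every other vertex retains its original degree, which was already less than $s$, so $\phi(G)\in\mF_n$. To recover $G$ from $\phi(G)$ one specifies the set $V_H\subseteq V(G)$ (at most $\binom{n}{c}$ choices) and, for each $v\in V_H$, its neighbourhood in $G$ (at most $2^n$ choices per vertex), giving $|\phi^{-1}(H)|\leq \binom{n}{c}\,2^{cn}$ for every $H\in\mF_n$. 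Summing over $H$ yields $g_n \leq b^n f_n$ for any $b>2^{c}$ and $n$ large enough. The only delicate step is the greedy argument of the previous paragraph, where the cumulative leaf sizes must be tracked carefully to guarantee $a_i$ free neighbours at each stage; the preimage count is a direct encoding estimate.
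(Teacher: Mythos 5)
Your proof is correct and follows essentially the same route as the paper: bound the number of high-degree vertices by a constant depending only on the excluded star forest $S$ via a greedy disjoint-stars argument, then encode each $G\in\mG_n$ by the graph obtained after deleting all edges at those vertices together with a bounded amount of extra data, giving $g_n\leq \binom{n}{c}2^{cn}f_n$. The only cosmetic differences are that you separate the non-trivial stars from the isolated vertices of $S$ (the paper treats isolated vertices as degenerate stars, so its constant $c$ is slightly larger than your $t-1$) and that your $\binom{n}{c}$ should really be $\sum_{j\leq c}\binom{n}{j}$ since $|V_H|$ may be smaller than $c$, but neither affects the conclusion.
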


\begin{proof}
Clearly $f_n\leq g_n$ so we only have to prove that there exists
$b>0$ such that $g_n\leq b^nf_n$. Let $c$ be the number of stars
in the star forest $S$ and let $s_1,\ldots,s_c$ be the respective
number of edges of these stars (so that $s=c+ s_1+\ldots+s_c$).

\ite We first prove that \emph{any graph in $\mG$ has less than
$c$ vertices of degree greater than $s$}. We suppose that a graph
$G\in \mG$ has $c$ vertices $v_1,\ldots,v_c$ of degree at least
$s$ and  we want to prove that $G$ contains the forest $S$ as a
subgraph (hence as a minor; which is impossible). For
$i=1\ldots,n$,  let $V_i$ be the set of vertices distinct from
$v_1,\ldots,v_c$ which are adjacent to $v_i$. In order to prove
that $G$ contains the forest $S$ as a subgraph it suffices to show
that there exist disjoint subsets $S_1\subseteq
V_1,\ldots,S_c\subseteq V_c$ of respective size $s_1,\ldots,s_c$.
Suppose, by induction, that for a given $k\leq c$ there exist
disjoint subsets $S_1\subseteq V_1,\ldots,S_{k-1}\subseteq
V_{k-1}$ of respective size $s_1,\ldots,s_{k-1}$. The set
$R_k=V_k-\bigcup_{i\leq k} S_i$  has size at least $s-c-\sum_{i<
k}s_i\geq s_k$, hence there is a subset $S_k\subseteq V_k$
distinct from the $S_i,~i<k$ of size $s_k$. The induction follows.

\ite We now prove that  $g_n \leq {n \choose c} 2^{cn} f_n$. For
any graph in $\mG$ one obtains a graph in $\mF$ by deleting all
the edges incident to the vertices of degree greater than $s$.
Therefore, any graph of $\mG_n$ can be obtained from a graph of
$\mF_n$ by choosing $c$ vertices and adding some edges incident to
these vertices. There are at most ${n \choose c} 2^{cn}f_n$ graphs
obtained in this way.
\end{proof}

It remains to prove that  $f_n\eqexp  n^{(1-1/k)n}$ for some
integer $k\geq 2$. Let $G$ be a graph in $\mF$ and let $T$ be a
tree spanning of one of its connected components. The tree $T$ has
height less than $p$ (otherwise $G$ contains the path $P$ as a
minor) and vertex degree at most $s$. Hence, $T$ has at most
$1+s+\ldots+s^{p-1}\leq s^p$ vertices. Thus the connected
components of the graphs in $\mF$ have at most $s^p$ vertices. For
a connected graph $G$, we denote by $m(G)$ the maximum $r$ such
that $\mF$ contains the graph consisting of $r$ disjoint copies of
$G$. We say that $G$ has \emph{unbounded multiplicity} if $m(G)$
is not bounded. Note that the graph consisting of 1 edge has
unbounded multiplicity since $\mG$ contains all matchings.

\begin{lem}\label{lem:unbounded-multiplicity}
Let $k$ be the size of the largest connected graph in $\mF$ having unbounded multiplicity.
Then, $\displaystyle f_n \eqexp n^{(1-1/k)n}$.
\end{lem}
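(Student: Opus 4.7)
We establish the two sides of $f_n \eqexp n^{(1-1/k)n}$ separately.

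For the \emph{lower bound}, let $H$ be a connected graph on $k$ vertices with unbounded multiplicity in $\mF$, so that $\mF$ contains the disjoint union $mH$ of $m$ copies of $H$ for every $m\geq 0$. For $n=mk$, a standard counting (choose an unordered partition of $[n]$ into $m$ blocks of size $k$, and in each block one of the $k!/|\mathrm{Aut}(H)|$ labellings of $H$) yields exactly $\frac{n!}{|\mathrm{Aut}(H)|^m\,m!}$ such graphs in $\mF_n$, which by Stirling's formula is at least $a^n n^{(1-1/k)n}$ for some $a>0$. For general $n=mk+r$ with $0\leq r<k$, appending $r$ isolated vertices to $mH$ keeps the graph in $\mF$ (the result is a minor of $(m+1)H$ obtained only by edge and vertex deletions, and $\mF$ is closed under such deletions since they do not raise the maximum degree), so the lower bound persists up to a polynomial factor.

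For the \emph{upper bound}, every component of a graph in $\mF$ has at most $s^p$ vertices (by the previous paragraph), so only finitely many isomorphism types of connected components can occur. Partition these types into \emph{small} (size $\leq k$) and \emph{large} (size in $(k,s^p]$). By the definition of $k$ each large type has bounded multiplicity, so there is an absolute constant $L$ such that every $G\in\mF$ contains at most $L$ large components, hence at most $Ls^p$ vertices in its large part. Choosing the vertex set of the large part and the isomorphism structure on it contributes only an $n^{O(1)}$ factor; conditional on this choice, the remaining graph has all components of size $\leq k$, and such graphs are enumerated by the exponential generating function $\exp(C(z))$ where $C(z)=\sum_{i=1}^{k} c_i\, z^i/i!$ is a polynomial of degree exactly $k$ (here $c_i$ counts labelled connected graphs of size $i$ appearing as small components; $c_k\geq 1$ since $H$ does).

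It remains to prove that $n'!\,[z^{n'}]\exp(C(z))\leq b^{n'}\,(n')^{(1-1/k)n'}$ for some $b>0$. Expanding, $[z^{n'}]\exp(C(z))$ is a sum over tuples $(j_1,\ldots,j_k)\in\N^k$ with $\sum_i i\,j_i=n'$ of $\prod_i c_i^{j_i}/\bigl((i!)^{j_i}j_i!\bigr)$. Writing $J:=\sum_i j_i$, multiplying by $n'!$ and applying Stirling, each term is bounded by $\exp\bigl(n'\log n' - J\log J + O(n')\bigr)$. Since $i\leq k$ forces $J\geq n'/k$ and the function $J\mapsto -J\log J$ is decreasing for $J\geq 1$, the maximum over feasible tuples is attained near the corner $j_k = n'/k$, $j_i = 0$ for $i<k$, yielding an exponent of $n'(1-1/k)\log n' + O(n')$. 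As the number of feasible tuples is polynomial in $n'$, this bound persists after summation. The main obstacle will be this final coefficient estimate on $\exp(C(z))$; I plan to carry it out by taking logarithms and using the monotonicity of $-J\log J$ (or equivalently, Lagrange multipliers on the simplex $\sum i\,j_i=n'$), being careful that the $O(n')$ error terms in the Stirling expansion are uniform in $(j_i)$.
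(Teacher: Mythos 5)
Your proof is correct and follows essentially the same strategy as the paper: lower bound by counting disjoint unions of copies of the size-$k$ graph of unbounded multiplicity, upper bound by covering $\mF$ with a class whose EGF is a polynomial times $\exp$ of a polynomial of degree $k$, and then extracting coefficient asymptotics. The two noteworthy deviations are as follows. First, the paper's cover $\mathcal{L}$ splits components by bounded vs.\ unbounded multiplicity, yielding $P(z)\exp(Q(z))$ with $\deg Q = k$; you instead split by component size $\leq k$ vs.\ $>k$, giving $\exp(C(z))$ with $\deg C = k$ after isolating the constantly-many ``large'' vertices. Your version overcounts small bounded-multiplicity types but that is harmless for the upper bound, and both versions exploit the crucial fact that every component has size $\leq s^p$, so only finitely many isomorphism types occur. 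Second, the paper delegates both coefficient asymptotics to Corollary VIII.2 of Flajolet--Sedgewick, whereas you carry out the estimate on $n'!\,[z^{n'}]\exp(C(z))$ by hand via Stirling and the bound $J \geq n'/k$. Your sketch of that step is sound: one does need to pass from $\sum_i j_i\log j_i$ to $J\log J + O(n')$ (the entropy bound $\sum j_i\log j_i \geq J\log J - J\log k$ handles this) and to note that the error terms in Stirling are uniform over the polynomially-many feasible tuples, both of which you flagged; filling those in is routine, so the argument stands.
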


\begin{proof}
\ite Let $G$ be a connected graph in $\mF$ of size $k$ having unbounded multiplicity. The class
of graphs consisting of disjoint copies of $G$ and isolated vertices (these are included in order
to avoid parity conditions) is contained in $\mF$ and has exponential generating function
 $\exp(z+ z^k/a(G))$, where $a(G)$ is the number of automorphisms of $G$. Hence
$f_n$ is of order at least $n^{(1-1/k)n}$, up to an exponential
factor (see Corollary VIII.2 in \cite{Flajolet:analytic}).

 \ite Let $\mathcal{L}$ be
the class of graphs in which every connected component $C$ appears
at most $m(C)$ times. Then clearly $\mF \subseteq \mathcal{L}$.
The exponential generating function for $\mathcal{L}$ is
$P(z)\exp(Q(z))$, where $P(z)$ collects the connected graphs with
bounded multiplicity, and $Q(z)$ those with unbounded
multiplicity. Since $Q(z)$ has degree $k$, we have an upper bound
of order $n^{(1-1/k)n}$.
\end{proof} %\vspace{-.4cm}
This finishes the proof of Claim \ref{claim:pseudo-factorial}.
\end{proof}

%\vspace{.5cm}

\ite  We now consider the classes of graphs containing all the
stars but not all the matchings. The lower bound for these classes
follows from Lemma~\ref{lem:estimates} while the upper bound is
given by the following claim.
%Note that if a class $\mG$ contains all the graphs made of 1 star and some isolated vertices, then $g_n\geq 2^{n-1}$.
%It remains to prove the following upper bound.
%The upper bound for $g_n$ is given by the following claim. (And the observation that if $\mG$ does not a matching $M$, then $\mathcal{M} \subseteq \Ex(M)$)\\

\begin{claim}
Let $M_k$ be a perfect matching on $2k$ vertices. The growth rate of  $\Ex(M_k)$ is at most exponential.
\end{claim}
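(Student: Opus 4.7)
The plan is to reduce the minor condition to a subgraph condition and then exploit the structural consequences of having a small maximum matching.

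First I would observe that since $M_k$ is a forest with maximum degree $1$, a graph $G$ contains $M_k$ as a minor if and only if $G$ contains $k$ pairwise vertex-disjoint edges as a subgraph: any $M_k$-minor yields $2k$ disjoint branch sets $V_1,V_1',\dots,V_k,V_k'$ with at least one edge between each matched pair $V_i,V_i'$, and selecting one such edge from each pair gives a matching of size $k$ in $G$. Hence $G \in \Ex(M_k)$ is equivalent to saying the maximum matching of $G$ has fewer than $k$ edges.

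Next I would use the standard observation that a maximal matching is also a vertex cover up to a factor of $2$: if $G$ has no matching of size $k$, take a maximal matching $F$; then $|F|\le k-1$, and the set $S$ of endpoints of $F$ has $|S|\le 2(k-1)$ and meets every edge of $G$ (otherwise $F$ would not be maximal). So every graph $G \in \Ex(M_k)$ has a vertex cover of size at most $2(k-1)$.

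Then I would count: pick the cover $S$ (at most $\binom{n}{2(k-1)}\le n^{2(k-1)}$ choices), and then choose which edges among the at most $2(k-1)\cdot n$ possible edges incident to $S$ are present (at most $2^{2(k-1)n}$ choices). Multiplying,
\[
g_n \;\le\; n^{2(k-1)}\cdot 2^{2(k-1)n} \;\le\; c^n
\]
for any $c>2^{2(k-1)}$ and $n$ large enough, since the polynomial factor is absorbed by any slightly larger exponential base. This gives the desired exponential upper bound.

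There is no real obstacle here; the only point to be careful about is the minor-versus-subgraph equivalence in the first step, but this is immediate because matchings have no vertices of degree $\ge 2$, so contracting branch sets contributes nothing beyond picking a single edge per matched pair of branch sets.
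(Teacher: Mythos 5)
Your proof is correct and follows essentially the same approach as the paper: both take a maximal matching, observe it has at most $2k-2$ vertices, note that the remaining vertices form an independent set (i.e.\ the matched vertices form a vertex cover of size at most $2k-2$), and then count by choosing the small set and the edges incident to it. The only cosmetic difference is that the paper phrases the structure as ``$G$ is a subgraph of the join of $K_{2k-2}$ with an independent set'' while you phrase it via vertex covers; the counting is identical.
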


\begin{proof}
Let $G$ be a graph of size $n$ in $\Ex(M_k)$ and let $M$ be a
maximal matching of $G$. The matching $M$ has no more than $2k-2$
vertices (otherwise, $M_k<G$). Moreover, the remaining vertices
form an independent set (otherwise, $M$ is not maximal).  Hence
$G$ is a subgraph of the sum $H_n$ of the complete graph
$K_{2k-2}$ and $n-(2k-2)$ independent vertices. There are ${n
\choose 2k-2}$ ways of labeling the graph $H_n$ and $2^{e(H_n)}$
ways of taking a subgraph, where $e(H_n)={2k -2 \choose 2} +
(2k-2)(n-2k+2)$ is the number of edges of $H_n$. Since  ${n
\choose 2k-2}$ is polynomial and $e(H_n)$ is linear, the number of
graphs of size $n$ in $\Ex(M_k)$ is bounded by an
exponential.\end{proof}

%\vspace{.5cm}

\ite  We now consider consider classes of graphs $\mG$ containing
neither all the matchings nor all the stars. If $\mG$ does not
contain all the graphs with a single edge, then either $\mG$
contains all the graphs without edges and $g_n=1$ for $n$ large
enough or $g_n=0$ for $n$ large enough. Observe that if $\mG$
contains the graphs with a single edge, then $g_n\geq
\frac{n(n-1)}{2}$. It only remains to prove the following claim:

\begin{claim}\label{claim:polynomial-growth}
Let $\mG$ be a minor-closed class  containing neither all the
matching nor all the stars. Then, there exists an integer $N$ and
a polynomial $P$  such that $g_n=P(n)$ for all $n\geq N$.
\end{claim}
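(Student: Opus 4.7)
The plan is to exploit the two exclusions to bound the number of non-isolated vertices in any $G \in \mG$, and then show that $g_n$ is a polynomial sum over a finite number of ``cores''. Concretely, the hypothesis provides integers $k$ and $s$ with $M_k \notin \mG$ and $K_{1,s} \notin \mG$; by minor-closure, every $G \in \mG$ has matching number at most $k-1$ and maximum degree at most $s-1$. The $2(k-1)$ endpoints of any maximum matching form a vertex cover, and every other non-isolated vertex is a neighbour of this cover, so $G$ has at most $C := 2(k-1)s$ non-isolated vertices.

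I would then encode each $G \in \mG_n$ as a pair $(S,H)$, where $S \subseteq [n]$ is the set of non-isolated vertices ($|S| \leq C$) and $H = G[S]$, a graph with no isolated vertex. Writing $I_m$ for the edgeless graph on $m$ vertices, $G$ is isomorphic to $H \sqcup I_{n - |S|}$. The decisive step is to show that, once $n$ is large enough, the membership of such a $G$ in $\mG$ depends only on the isomorphism class of $H$. For this I would write $\mG = \Ex(F_1,\dots,F_t)$ (finite by Robertson--Seymour) and set $J := \max_i |V(F_i)|$. Since there is no edge between $V(H)$ and $V(I_m)$, every branch set realising some $F_i$ as a minor of $H \sqcup I_m$ lies entirely in $V(H)$ or entirely in $V(I_m)$, and the latter must be single vertices mapping to isolated-vertex components of $F_i$. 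Hence at most $|V(F_i)| \leq J$ vertices of $I_m$ are ever used, so the set of excluded minors appearing in $H \sqcup I_m$ is the same for all $m \geq J$; in particular, the truth value of ``$H \sqcup I_m \in \mG$'' is constant for $m \geq J$.

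Let $\mH$ be the finite set of isomorphism classes of graphs with no isolated vertex, on at most $C$ vertices, that are \emph{stable} in $\mG$, meaning $H \sqcup I_m \in \mG$ for all $m \geq J$. Then for every $n \geq N := C + J$,
\[
g_n \;=\; \sum_{[H] \in \mH} \frac{n!}{(n - |V(H)|)! \cdot |\mathrm{Aut}(H)|},
\]
because each $G \in \mG_n$ comes from a unique class $[H] \in \mH$ together with a size-$|V(H)|$ subset of $[n]$ and an $H$-labelling of it, contributing $\binom{n}{|V(H)|} \cdot |V(H)|!/|\mathrm{Aut}(H)|$ labelled graphs per class. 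Each summand is a polynomial in $n$ of degree $|V(H)| \leq C$, and so $g_n$ is polynomial in $n$ of degree at most $C$ for $n \geq N$.

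I expect the main obstacle to be the stabilisation step: one must argue cleanly that no branch set of an excluded-minor model can straddle $V(H)$ and $V(I_m)$, so that the number of $I_m$-vertices used is controlled by $J$ and increasing $m$ beyond $J$ neither creates nor destroys any excluded minor. Once that is granted, the polynomiality of $g_n$ is pure bookkeeping.
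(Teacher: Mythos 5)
Your proof is correct and follows essentially the same strategy as the paper: bound the number of non-isolated vertices by combining the excluded matching and excluded star, show that for $n$ large membership in $\mG$ is determined by the ``core'' of non-isolated vertices, and then count by summing binomial coefficients over the finitely many admissible cores. The one place you diverge in the details is the stabilisation step: the paper passes to an auxiliary class $\mF$ obtained by deleting the isolated vertices from each excluded minor and shows $\mG_n = \mF_n$ for $n$ large, whereas you argue directly with minor models that no branch set can straddle $V(H)$ and $V(I_m)$, so once $m \geq J$ the truth of ``$F_i$ is a minor of $H \sqcup I_m$'' no longer changes; both arguments rest on the same observation that an excluded minor's isolated vertices can always be accommodated in the edgeless part once it is large enough, so this is a presentational rather than a substantive difference.
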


\paragraph{Remark.} For any integer $k\geq 2$, there exists a minor-closed class of graphs $\mG$ such that $g_n=P(n)$ where $P$ is a polynomial of degree $k$. Indeed, we let the reader check that the class  $\mG$ of graphs made of one star of size at most $k$ plus some isolated vertices satisfies this property.

\begin{proof}
Since $\mG$ % has polynomial growth, it
does not contain all
matchings, one of the  minimal excluded minors of $\mG$ is a
graph $M$ which is made of a set of $k$ independent edges plus $l$
isolated vertices. Moreover, $\mG$ does not contain all the stars,
thus one of the  minimal excluded minors of $\mG$ is a graph $S$
made of one star on $s$ vertices plus $r$ isolated vertices.

\ite We first prove that \emph{for every graph $G$ in $\mG$ having
$n\geq \max(s+r,2k+l)$ vertices, the number of isolated vertices
is at least $n-2ks$.} Observe that for every graph $G$ in $\mG$
having at least $s+r$ vertices, the degree of the vertices is less
than $s$ (otherwise, $G$ contains the star $S$ as a minor).
Suppose now that a graph $G$  in $\mG$ has $n\geq \max(s+r,2k+l)$
vertices from which at least $2ks$ are not isolated. Then, one can
perform a greedy algorithm in order to find $k$ independent edges.
In this case, $G$ contains the graph $M$ as a minor, which is
impossible.

\ite Let $M,S,H_1,\ldots,H_h$ be the minimal excluded minors of
$\mG$ and let $M',S',H_1',\ldots,H_h'$ be the same graphs after
deletion of their isolated vertices. We prove that \emph{there
exists $N\in \N$ such that $\mG_n=\mF_n$ for all $n\geq N$, where
$\mF=\Ex(H_1',\ldots,H_h')$}. Let $m$ be the maximal number of
isolated vertices in the excluded minors $M,S,H_1,\ldots,H_h$ and
let $N=\max(s+r,2k+l,2ks+m)$. If $G$ has at least $N$ vertices,
then $G$ has at least $m$ isolated vertices, hence $G$ is in $\mG$
if and only if it is in $\mF$.

\ite We now prove that there exists a polynomial $P$ with rational
coefficients such that $f_n\equiv |\mF_n|=P(n)$. Let $\mC$ be the
set of graphs in $\mF$ without isolated vertices; by convention we
consider the graph of size 0 as being in $\mC$. The graphs in
$\mC$ have at most $\max(s+r,2k+l,2ks)$ vertices, hence $C$ is a
finite set. We say that a graph in $G$ \emph{follows the pattern}
of a graph $C\in \mC$ if $C$ is the graph obtained from $G$ by
deleting the isolated vertices of $G$ and reassigning the labels
in $\{1,\ldots,r\}$ respecting the order of the labels in $G$. By
the preceding points, any graph in $\mF$ follows the pattern of a
graph in $\mC$ and, conversely, any graph following the pattern of
a graph in $\mC$ is in $\mF$ (since the excluded minors
$M',S',H_1',\ldots,H_h'$ of $\mF$ have no isolated vertices). The
number of graphs of size $n$ following the pattern of a given
graph $C\in \mC$ is ${n \choose |C|}$, where $|C|$ is the number
of vertices of $C$. Thus, $f_n=\sum_{C\in\mC} {n \choose |C|}$
which is a polynomial.
\end{proof}

This conclude the proof of Theorem \ref{th:refine}.

\section{Growth constants}\label{section:growth-constants}

We say that class $\mathcal{G}$ \emph{has growth constant}
$\gamma$ if $\lim_{n \to \infty} \left(g_n/ n!\right)^{1/n} =
\gamma$, and we write $\gamma(\mathcal{G})= \gamma$.

\begin{prop}\label{th:existence}
Let $\mathcal{G}$ be a minor-closed class such that all the excluded minors of $\mathcal{G}$ are 2-connected. Then, $\gamma(\mathcal{G})$ exists. %Moreover, in this case we have $\gamma = \sup (|\mathcal{G}_n|/n!)^{1/n}$.
\end{prop}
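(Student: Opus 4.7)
The plan is to exploit the 2-connected excluded minors hypothesis to derive a super-multiplicative inequality at the level of connected graphs, apply Fekete's lemma to show that the connected growth rate has a limit, and then transfer this limit to $g_n/n!$ via the exponential formula $G(z) = \exp C(z)$.

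The key input will be two closure properties of $\mathcal{G}$: (i) $\mathcal{G}$ is closed under disjoint union, and (ii) if $G \in \mathcal{G}$ and $e = uv$ has $u, v$ in different components of $G$, then $G + e \in \mathcal{G}$. Property (i) is immediate, since a 2-connected excluded minor is connected and must embed into a single component of $G_1 \sqcup G_2$. For (ii), one case-analyses a hypothetical excluded minor $H < G + e$ according to the role of $e$ in the deletions and contractions producing $H$: if $e$ is deleted then $H < G$; if $e$ survives as an edge of $H$ then $e$ is a bridge (its endpoints remain in different components of $H - e$ because $G$ has no path between them), impossible in a 2-connected graph on at least three vertices; and if $e$ is contracted then $H$ becomes a minor of $(G+e)/e$, in which the merged vertex is a cut vertex, so the 2-connected $H$ must lie in a single block of $(G+e)/e$, and every such block is a block of $G$. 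All three cases contradict $G \in \mathcal{G}$.

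Let $C_n$ denote the number of labelled connected graphs of size $n$ in $\mathcal{G}$ and set $c_n = C_n/n!$. Using (i) and (ii), I would bridge-join connected graphs: placing two connected graphs of sizes $n, m$ on complementary subsets of $[n+m]$ and adding a single cross-edge gives $\binom{n+m}{n} \cdot nm \cdot C_n C_m$ configurations, each a connected graph in $\mathcal{G}_{n+m}$. A given resulting graph $G$ is produced exactly twice per $(n,m)$-bridge it contains (once for each choice of side), and since bridges of a connected graph on $N$ vertices form a forest there are at most $N-1$ of them. Hence $C_{n+m} \geq \frac{nm \binom{n+m}{n}}{2(n+m-1)} C_n C_m$, i.e.\ $c_{n+m} \geq \frac{nm}{2(n+m-1)} c_n c_m$, which rearranges to the super-multiplicativity $d_{n+m} \geq d_n d_m$ for $d_n := n c_n / 2$. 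By Fekete's lemma $(\log d_n)/n$ converges to some $L$, finite because the upper bound $c_n \leq c^n$ from Theorem~\ref{th:small} applies; since $(\log n)/n \to 0$ this transfers to $c_n^{1/n} \to e^L$. Finally the exponential formula $G(z) = \exp C(z)$, valid by (i), forces the radii of convergence of $G$ and $C$ to coincide, so $\limsup (g_n/n!)^{1/n} = e^L$; combined with $g_n \geq C_n$, which gives $\liminf (g_n/n!)^{1/n} \geq e^L$, we conclude that $\gamma(\mathcal{G}) = e^L$ exists. The main technical hurdle is the edge-addition closure (ii), especially the contraction case; once (ii) is secured, the bridge-join count, Fekete's lemma and the exponential-formula transfer are all routine.
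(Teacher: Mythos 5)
Your argument is essentially a self-contained reconstruction of the result that the paper simply cites. The paper observes that the hypothesis makes $\mathcal{G}$ \emph{addable} in McDiarmid's sense (closed under disjoint union and under joining two components by an edge) and \emph{small} (by Theorem~\ref{th:small}), and then invokes Theorem~3.3 of \cite{McDiarmid:growth-constant-planar-graphs}, which says that small addable classes have growth constants. Your closure properties (i) and (ii) are exactly the two halves of addability, and the bridge-join plus Fekete argument is the core of McDiarmid's proof of that theorem. The one real difference in route is the final transfer: McDiarmid (and the paper later, in the proof of Theorem~\ref{th:limit}) works directly with supermultiplicativity of $g_n/(e^2 n!)$, whereas you pass to connected graphs and return via $G(z)=\exp C(z)$ and Pringsheim. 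Both are fine; the direct version avoids invoking the exponential formula at the cost of a slightly fussier join count. Two details in your proof of (ii) should be tightened. In the contraction case, the merged vertex need not be a cut vertex if $u$ or $v$ is isolated in its component, and in the ``$e$ survives'' case you implicitly need $H$ to have at least three vertices. The cleanest fix handles all three cases at once via blocks: the blocks of $G+e$ are the blocks of $G$ together with the new $K_2$-block on $e$, and the blocks of $(G+e)/e$ are, up to the relabelling $u,v\mapsto w$, exactly the blocks of $G$; since a $2$-connected graph is always a minor of a single block, and since one may assume no excluded minor is $K_2$ (otherwise $\mathcal{G}$ is the class of edgeless graphs and $\gamma=0$ trivially), every $2$-connected $H<G+e$ is already a minor of $G$.
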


\begin{proof}
In the terminology of \cite{McDiarmid:growth-constant-planar-graphs}, the class $\mathcal{G}$ is small (because of Theorem
\ref{th:small}), and it is addable because of the assumption on the forbidden minors. Hence,
Theorem 3.3 from \cite{McDiarmid:growth-constant-planar-graphs} applies and there exists a growth constant.
%The last claim follows from the application of Fekete's lemma used to show the existence  of the growth constant.
\end{proof}

We know state a theorem about the set $\Gamma$ of growth constants of minor-closed classes. In what follows we denote by $\xi \approx 1.76$ the inverse of the unique positive root of $x \exp(x) =1 $.

\begin{thm}\label{thm:growth-constants}
Let $\Gamma$ be the set of real numbers which are growth constants of minor-closed classes of graphs.
\begin{enumerate}
\item The values $0,~1,~\xi$ and $e$ are in $\Gamma$.
\item If $\gamma \in \Gamma$ then $2 \gamma \in \Gamma$.
\item There is no $\gamma \in \Gamma$ with $0 < \gamma <1$.
\item There is no $\gamma \in \Gamma$ with $1 < \gamma <\xi$.
\end{enumerate}
\end{thm}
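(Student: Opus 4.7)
The plan is to treat the four items separately.

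\textbf{Item 1 (values in $\Gamma$).} For each value I would exhibit an explicit minor-closed class realizing it. For $\gamma=0$ the matching class $\mathcal{M}$ works, since $|\mathcal{M}_n|/n! \to 0$ super-exponentially. For $\gamma=1$, take the path forests $\mathcal{P}$: their EGF is $\exp(z + z^2/(2(1-z)))$, and a saddle-point analysis around the essential singularity at $z=1$ gives $(g_n/n!)^{1/n} \to 1$. For $\gamma=e$, take the forests, equal to $\Ex(K_3)$ since any cycle contracts to $K_3$; the EGF $\exp(U(z))$, where $U(z)$ is the EGF of labelled unrooted trees (radius $1/e$), has dominant singularity $1/e$. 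For $\gamma=\xi$, take the caterpillar forests, i.e.\ the graphs of pathwidth at most one, which form a minor-closed class. An oriented caterpillar is essentially a sequence of ``spine stars'' (a spine vertex together with a set of pendant leaves), leading to an EGF whose dominant singularity lies at the positive root of $ze^z=1$, namely $z=1/\xi$; the same singularity governs the EGF of caterpillar forests, yielding growth constant $\xi$.

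\textbf{Item 2 (closure under doubling).} I would use an apex construction. Given minor-closed $\mathcal{G}$, define $\mathcal{G}^* = \{G : \exists v \in V(G),\ G-v \in \mathcal{G}\}$. The class $\mathcal{G}^*$ is minor-closed by case analysis on how a minor operation interacts with the apex $v$: operations not touching $v$ preserve it; deleting $v$ lands the graph in $\mathcal{G} \subseteq \mathcal{G}^*$; contracting an edge $vw$ produces a new vertex that serves as the new apex. For the count, the graphs in $\mathcal{G}^*_n$ having vertex $n$ as a designated apex number exactly $2^{n-1}g_{n-1}$, and summing over possible apex vertices gives the upper bound $g^*_n \le n\cdot 2^{n-1}g_{n-1}$. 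Both bounds lead to $(g^*_n/n!)^{1/n} \to 2\gamma$, so $2\gamma \in \Gamma$.

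\textbf{Items 3 and 4 (forbidden growth values).} Item 3 follows directly from Theorem~\ref{th:refine}: in every non-factorial regime, $g_n/n! \to 0$ super-exponentially (from $g_n\le c^n$ in the constant, polynomial, exponential regimes; from $g_n \le c^n n^{(1-1/k)n}$ with $(n^{(1-1/k)n}/n!)^{1/n} \to e\,n^{-1/k} \to 0$ in the semi-factorial regime; and from $g_n \le \epsilon^n n!$ for every $\epsilon>0$ in the almost-factorial regime). So $\gamma>0$ forces the factorial regime, whence $g_n \ge n!$ and $\gamma \ge 1$. For Item 4, if $\gamma(\mathcal{G}) > 1$ then by Item 3, $\mathcal{G}$ is factorial and contains $\mathcal{P}$; the strategy is to show that the excess factor $g_n/n! \gtrsim \gamma^n$ cannot be accounted for by relabellings of path forests alone and must come from containing many branched trees, which together with minor-closedness and the presence of all paths forces $\mathcal{G}$ to contain every caterpillar forest, giving $\gamma \ge \xi$ via Item 1.

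\textbf{Main obstacle.} The delicate step is Item 4: making precise the implication that a minor-closed class strictly containing $\mathcal{P}$ with $\gamma>1$ must contain every caterpillar forest, and in particular ruling out ``intermediate'' minor-closed classes between $\mathcal{P}$ and the caterpillar forests whose growth constant falls in the open interval $(1,\xi)$. A plausible route is to inspect the minimal excluded minors of $\mathcal{G}$ among caterpillars: excluding any caterpillar as a minor should impose a restriction on the allowed branchings and pendants along paths strong enough to limit $g_n$ to $n!\cdot 2^{o(n)}$, contradicting $\gamma>1$.
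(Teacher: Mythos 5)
Items 1--3 of your proposal essentially match the paper. For item~1 you realize $1,\xi,e$ by the same classes (path forests, caterpillar forests, forests), differing only in the example for $0$ (matchings rather than $\Ex(P)$); both work. Item~2 is the same apex construction with the same sandwich $2^{n-1}g_{n-1} \le g^*_n \le n\,2^{n-1}g_{n-1}$. Item~3 is read off the classification theorem exactly as in the paper.

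The gap is in item~4, and it is not a matter of missing detail: the proposed route is wrong. You claim that $\gamma(\mathcal{G})>1$ forces $\mathcal{G}$ to contain every caterpillar forest, and that ``excluding any caterpillar as a minor should\dots limit $g_n$ to $n!\cdot 2^{o(n)}$, contradicting $\gamma>1$.'' This is refuted by your own item~2: the apex class of path forests, $\mathcal{A}\mathcal{P}=\{G:\exists v,\ G-v\in\mathcal{P}\}$, is minor-closed and has growth constant $2>\xi$, yet it excludes some caterpillar as a minor. (For example, the caterpillar with spine $v_1-\cdots-v_6$ and one pendant leaf on each $v_i$ is not in $\mathcal{A}\mathcal{P}$: deleting any single vertex leaves a component with a degree-$3$ vertex.) So $\gamma>1$ does not imply containment of all caterpillar forests, and excluding one caterpillar does not cap the growth at $n!\cdot 2^{o(n)}$.

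The paper works around exactly this obstruction by setting up a genuine trichotomy. It introduces two families of witnesses $\Cat_l$ and $\Ap_l$: if $\mathcal{G}$ contains all $\Cat_l$ it contains every caterpillar forest and $\gamma\ge\xi$; if $\mathcal{G}$ contains all $\Ap_l$ it contains the apex class of path forests and $\gamma\ge 2$; otherwise $\mathcal{G}\subseteq\Ex(\Cat_k,\Ap_l)$ for some $k,l$, and the heart of the argument (Lemmas~\ref{lem:degree2-on-path} and~\ref{lem:subdivide-tree}) shows this class has growth constant exactly~$1$. That last step is substantial: one bounds the number of high-degree vertices on any path of such a graph, relates the graph to a DFS forest of bounded height via a subdivision argument, and then uses the fact that the generating function of bounded-height forests composed with $z/(1-z)$ has radius of convergence~$1$. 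Your sketch contains none of this and cannot, because the single obstruction family (caterpillars) that you exclude is not enough; both obstruction families are essential.
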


\paragraph{Remarks.} \ite The property 1 of Theorem \ref{thm:growth-constants} can be extended with the growth constants of the minor-closed classes listed in table by table \ref{table:known-constants}. \\
\ite The properties 2, 3 and 4 of Theorem \ref{thm:growth-constants} remain valid if one replaces $\Gamma$ by the set $\Gamma'=\{\gamma'=\limsup \left(\frac{g_n}{n!}\right)^{1/n} / \mG \textrm{ minor-closed}\}$.

\begin{table}[htb]
\begin{center}
%\bigskip
\begin{tabular}{|l|r|l|}
\hline Class of graphs & Growth constant & Reference \\ \hline
$\Ex(P_k)$ & $0$ & This paper\\
Path forests  & $1$ & Standard \\
Caterpillar forests & $\xi \approx 1.76$ & This paper \\
Forests $= \Ex(K_{3})$ & $e \approx 2.71$ &  Standard\\
$\Ex(C_4)$ & $3.63$ & \cite{Gimenez:given-3connected} \\
$\Ex(K_4-e)$ & $4.18$ & \cite{Gimenez:given-3connected} \\
$\Ex(C_5)$ & $4.60$ & \cite{Gimenez:given-3connected} \\
Outerplanar $=\Ex(K_4,K_{2,3})$ & 7.320 & \cite{Bodirsky:series-parallel+outerplanar} \\
$\Ex(K_{2,3})$ & 7.327 & \cite{Bodirsky:series-parallel+outerplanar} \\
Series parallel $=\Ex(K_4)$ &  9.07 & \cite{Bodirsky:series-parallel+outerplanar} \\
$\Ex(W_4)$ & $11.54$ & \cite{Gimenez:given-3connected} \\
$\Ex(K_5-e)$ & $12.96$ & \cite{Gimenez:given-3connected} \\
$\Ex(K_2 \times K_3)$ & $14.13$ & \cite{Gimenez:given-3connected}\\
Planar &   27.226 &  \cite{Gimenez:planar-graphs} \\
Embeddable in a fixed surface  & 27.226 &  \cite{McDiarmid:graphs-on-surfaces} \\
$\Ex(K_{3,3})$ & 27.229 &   \cite{Gerke:K33-free} \\
 \hline
\end{tabular}
\caption{A table of some known growth constants.}\label{table:known-constants}
\end{center}
\end{table}

Before the proof of Theorem \ref{thm:growth-constants}, we make
the following remark. Let $\mathcal{G}$ be a minor-closed class,
let $\mathcal{C}$ be the family of all connected members of
$\mathcal{G}$, and let $G(z)$ and $C(z)$ be the corresponding
generating functions. Then if $\mathcal{C}$ has growth constant
$\gamma$, so does $\mathcal{G}$. This is because the generating
functions $G(z)$ is bounded by $\exp(C(z))$ (they are equal if the
forbidden minors for $\mathcal{G}$ are all connected), and both
functions have the same dominant singularity.

\begin{proof}
1) \ite All classes whose growth is not at least factorial have
growth constant $0$. In particular, $\gamma(\Ex(P)) = 0$ for any
path $P$.

\ite The number of labelled paths is $n!/2$. Hence, by the remark
made before the proof, the growth constant of the class of path
forests is 1.

\ite A \emph{caterpillar} is a tree consisting of a path and
vertices directly adjacent to (i.e. one edge away from) that path.
Let $\mC$ be the class of graphs whose connected components are
caterpillars, which is clearly minor-closed. A rooted caterpillar
can be considered as an ordered sequence of stars. Hence the
associated generating function is $1/(1 - z e^z)$. The dominant
singularity is the smallest positive root of $1-ze^z=0$, and
$\gamma(\mC)$ is the inverse $\xi$ of this value.

%% Note OB: Le paragraphe precedent devrait etre ameliore.
\ite The growth constant of the class of acyclic graphs (forests)
is the number $e$. This is because the number of labelled trees is
$n^{n-2}$ which, up to a sub-exponential factor, is asymptotic to
$\sim e^n n!$.

2) This property follows from an idea by Colin McDiarmid. Suppose $\gamma(\mathcal{G})= \gamma$, and
let $\mathcal{AG}$ be family of graphs $G$ having a
vertex $v$ such that $G-v$ is in $\mathcal{G}$; in this case we say that $v$ is an apex of $G$. It
is easy to check that  if $\mathcal{G}$ is minor-closed, so is $\mathcal{AG}$. Now we have
$$ 2^n |\mathcal{G}_n| \le   |\mathcal{AG}_{n+1}| \le (n+1) 2^n |\mathcal{G}_n|. $$
The lower bound is obtained by taking a graph $G \in \mathcal{G}$
with vertices $[n]$, adding $n+1$ as a new vertex, and making
$n+1$ adjacent to any subset of $[n]$. The upper bound follows the
same argument by considering which of the vertices $1,2,\dots,n+1$
acts as an apex. Dividing by $n!$ and taking $n$-th roots, we see
that $\gamma(\mathcal{AG})= 2\gamma(\mathcal{G})$.

3) This has been already shown during the proof of Theorem \ref{th:refine}. Indeed, if a
 minor-closed class $\mathcal{G}$ contains all paths, then $|\mathcal{G}_n|\ge n!/2$ and the growth constant is at least $1$.
 Otherwise $g_n < \epsilon^n n^n $ for all $\epsilon>0$ and  $\gamma(\mathcal{G})=0$.\\

4) We consider the graphs $\Cat_l$ and $\Ap_l$ represented in
Figure \ref{fig:two-obstructions}.

\begin{figure}[ht!]\begin{center} \input{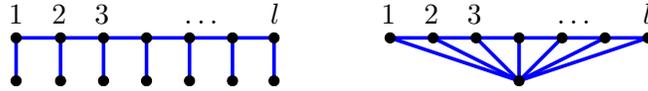}\caption{The graph $\Cat_l$ (left) and the graph
$\Ap_l$ (right).}\label{fig:two-obstructions} \end{center}\end{figure}

If a minor-closed class $\mG$ contains the graphs $\Cat_l$ for all
$l$, then $\mG$ contains all the caterpillars hence
$\gamma(\mG)\geq \xi\approx 1.76$. If $\mG$ contains the graphs
$\Ap_l$ for all $l$, then $\mG$ contains the apex class of path
forests and $\gamma(\mG)\geq 2$. Now, if $\mG$ contains neither
$\Cat_k$ nor  $\Ap_l$ for some $k,l$, then $\mG\subseteq
\Ex(\Cat_l,\Ap_l)$. Therefore, it is sufficient to prove the
following claim.

\begin{claim} \label{claim:gap}
The growth constant of the class \emph{$\Ex(\Cat_k,\Ap_l)$} is 1 for all $k> 2 ,l>1$.
\end{claim}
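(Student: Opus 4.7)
The lower bound is immediate: for $k>2$ the caterpillar $\Cat_k$ has a vertex of degree at least $3$, and for $l>1$ the graph $\Ap_l$ contains a cycle; hence neither is a minor of any path. Therefore every path lies in $\Ex(\Cat_k,\Ap_l)$, giving $g_n\geq n!/2$ and $\gamma(\Ex(\Cat_k,\Ap_l))\geq 1$.

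For the upper bound I aim to prove $g_n\leq n!\cdot\mathrm{poly}(n)$, which forces $\gamma\leq 1$. Since both $\Cat_k$ and $\Ap_l$ are connected, the standard relation $G(z)=\exp C(z)$ on exponential generating functions shows that $\Ex(\Cat_k,\Ap_l)$ and its connected subclass $\mathcal{C}$ share the same growth constant, so it suffices to bound $c_n=|\mathcal{C}_n|$. My plan is to encode each connected $G\in\Ex(\Cat_k,\Ap_l)$ by a pair $(T,E)$, where $T$ is a DFS spanning tree of $G$ rooted at vertex $1$ and $E$ is the set of back-edges (non-tree edges) of $G$ with respect to $T$. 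This encoding is surjective onto $\mathcal{C}_n$, so an upper bound on pairs is an upper bound on $c_n$.

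The heart of the argument is two structural claims, with constants $\lambda=\lambda(k)$ and $\mu=\mu(k,l)$ independent of $n$. \emph{Claim (A):} The tree $T$ has at most $\lambda$ leaves. Otherwise, the minimal subtree of $T$ joining its leaves is a small tree with many leaves, which topologically contains $\Cat_k$, giving $\Cat_k < T \subseteq G$ and contradicting $G\in\Ex(\Cat_k)$. \emph{Claim (B):} The number $|E|$ of back-edges is at most $\mu$. Indeed, by (A), $T$ is a subdivision of a skeleton with $O(\lambda)$ special vertices and thus decomposes into $O(\lambda)$ linear segments; since back-edges in a DFS join a vertex to one of its ancestors, they interact strongly with this segment decomposition. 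A pigeonhole argument then forces large $|E|$ to produce either an $\Ap_l$-minor (when many chords concentrate on a single segment and share an endpoint, giving a fan after contracting the segment) or a $\Cat_k$-minor (when back-edges combine with the branches of $T$).

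Given (A) and (B), the final counting is routine. By (A), the number of labelled trees $T$ of the prescribed shape is at most $n!\cdot\mathrm{poly}(n)$: one selects among $O(1)$ skeletons, distributes labels among the $O(\lambda)$ segments in $\mathrm{poly}(n)$ ways, and chooses a vertex labelling in $n!$ ways. By (B), the number of admissible back-edge sets is $O(n^{2\mu})$. Each $G\in\mathcal{C}_n$ admits at least one DFS tree, so $c_n\leq n!\cdot\mathrm{poly}(n)$ and hence $(c_n/n!)^{1/n}\to 1$. The principal obstacle will be making claim (B) quantitative: one must extract the forbidden minors from various chord configurations via a careful pigeonhole-plus-contraction argument that simultaneously handles interactions between back-edges and the branching structure of $T$.
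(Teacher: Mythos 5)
Your lower bound is fine, and the general strategy of encoding via a DFS spanning tree is the same as the paper's. But both of your structural Claims (A) and (B) are false, and the paper does not prove either of them.

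Claim (A) fails already for stars. The star $K_{1,n-1}$ lies in $\Ex(\Cat_k,\Ap_l)$ for all $k>2$, $l>1$: its minors are star forests, which have diameter $2$ per component (whereas $\Cat_k$ for $k>2$ contains $P_5$) and are acyclic (whereas $\Ap_l$ for $l>1$ contains a triangle). Yet a DFS from the centre produces the star itself as the DFS tree, with $n-1$ leaves. The heuristic ``the minimal subtree joining the leaves is a small tree with many leaves, hence topologically contains $\Cat_k$'' is exactly wrong here: that minimal subtree is the star, which has many leaves, is not small, and does not topologically contain any caterpillar with a spine of length $\geq 3$. Many leaves do not force a long spine.

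Claim (B) also fails. Take the windmill (friendship) graph $W_m$: $m$ triangles sharing one vertex $c$, so $n=2m+1$. Then $W_m$ has no $P_5$-minor (every connected subgraph on $\geq 3$ vertices must use $c$, and deleting $c$ leaves a perfect matching; a short branch-set case analysis rules $P_5$ out) and no $C_4$-minor (all its cycles are triangles through $c$), so $W_m\in\Ex(\Cat_k,\Ap_l)$ for all $k>2$ and $l\geq 3$. A DFS from $c$ has tree edges $cu_i,\,u_iw_i$ and $m$ back-edges $w_ic$, so $|E|=\Theta(n)$. Thus the back-edge count is not bounded by a constant $\mu(k,l)$, and your estimate $O(n^{2\mu})$ for admissible back-edge sets, as well as the ``$O(1)$ skeletons'' count in the tree step, both break down.

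The paper's proof replaces (A) and (B) by a different structural statement: for every $G\in\Ex(\Cat_k,\Ap_l)$ and every simple path $P$ in $G$, fewer than $kl+4k^3l$ vertices of $P$ have degree $>2$ (Lemma~\ref{lem:degree2-on-path}). This bounds the \emph{height} of the DFS tree after suppressing degree-$2$ vertices incident to two tree edges, while allowing both arbitrarily many leaves (the star) and $\Theta(n)$ back-edges (the windmill). The count is then carried out on the generating-function level: the EGF of bounded-height DFS-forest pairs is entire, back-edges contribute at most a $2^{Nn}$ factor that preserves this, and re-inserting the suppressed degree-$2$ vertices corresponds to the substitution $z\mapsto z/(1-z)$, which brings the radius of convergence down to $1$. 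If you want to salvage your approach, the quantity to bound is the number of branch vertices along a single root-to-leaf path, not the total number of leaves or back-edges.
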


\paragraph{Remark.}
Claim \ref{claim:gap} gives in fact a characterization of the
minor-closed classes with growth constant 1. These are the classes
containing all the paths but neither all the caterpillars nor all
the graphs in the apex class of the path forests. For instance,
the class of trees not containing a given caterpillar (as a minor)
and the class of graphs not containing a given star (as a minor)
both have growth constant 1.

\begin{proof}
Observe that the class $\Ex(\Cat_k,\Ap_l)$ contains all paths as
soon as $k>2$ and $l>1$. Hence, $\gamma(\Ex(\Cat_k,\Ap_l))\geq 1$
(by Lemma \ref{lem:estimates}) and we only need to prove that
$\gamma(\Ex(\Cat_k,\Ap_l))\leq~1$. We first prove a result about
the simple paths of the graphs in $\Ex(\Cat_k,\Ap_l)$.

\begin{lem}\label{lem:degree2-on-path}
Let $G$ be a graph in \emph{$\Ex(\Cat_k,\Ap_l)$} and let $P$ be a
simple path in $G$. Then, there are less than $kl+4k^3l$ vertices
in $P$ of degree greater than 2.
\end{lem}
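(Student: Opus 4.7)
The plan is to split the count of vertices of $P$ of degree $\geq 3$ in $G$ by the type of a chosen non-path edge incident to each. For each such $v$ pick a non-$P$ edge $e_v = v w(v)$; call $v$ \emph{external} if $w(v) \notin V(P)$ and a \emph{chord vertex} otherwise. I will bound the number of external vertices by $(k-1)(l-1) < kl$ and the number of chord vertices by $4 k^3 l$, which adds up to the advertised bound.

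For the external count, I use pigeonhole. If $k$ external vertices have pairwise distinct off-path neighbours $w(\cdot)$, then contracting the segments of $P$ between consecutive such vertices exhibits $\Cat_k$ as a minor (a spine of length $k$ with $k$ distinct pendants). If instead some off-path vertex $u$ is chosen as $w(v)$ for $l$ external vertices $v_{i_1}, \dots, v_{i_l}$ (in order along $P$), contracting the subpaths of $P$ between consecutive $v_{i_s}$ produces $\Ap_l$ with apex $u$ over a path of $l$ vertices. Since neither can happen, there are fewer than $kl$ external vertices.

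For the chord count, each chosen edge $e_v$ is a chord of $P$, and each chord is chosen by at most two vertices, so the chord-vertex count is at most twice the number of distinct selected chords. I view chords as intervals on $P$, so any two are pairwise disjoint, nested, or crossing, and I establish three minor-extraction results. First, $k$ pairwise disjoint chords yield $\Cat_k$: for each chord $v_i v_j$ contract the interior subpath $v_{i+1}, \dots, v_{j-1}$ into a single vertex $u_s$, making $\{v_i, u_s, v_j\}$ a triangle, then delete the edge $u_s v_j$ so that $u_s$ becomes a pendant at $v_i$ while the chord preserves the spine between $v_i$ and $v_j$; chaining $k$ such moves along $P$ gives a caterpillar with $k$ pendants. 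Second, $l$ pairwise nested chords $[i_1,j_1] \supset \cdots \supset [i_l,j_l]$ yield $\Ap_l$: contracting the subpath $v_{j_l}, \dots, v_{j_1}$ of $P$ into a vertex $R$ turns each chord into an edge $R v_{i_s}$, and after contracting the intervening subpaths on the left, $R$ is the apex of a path on $\geq l$ vertices. Third, $l$ pairwise crossing chords yield $\Ap_l$ by the symmetric contraction of the subpath containing all $l$ right endpoints.

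Since $G$ contains neither $\Cat_k$ nor $\Ap_l$ as a minor, the chord family has no $k$ pairwise disjoint, no $l$ pairwise nested, and no $l$ pairwise crossing members. An iterated Erd\H{o}s--Szekeres argument on the intervals (sort chords by left endpoint; a monotone-decreasing subsequence of right endpoints forms a nested family, while a monotone-increasing subsequence is split further into disjoint versus crossing by another pass) then bounds the number of chords by a polynomial in $k$ and $l$, and a loose version of this bound fits within $2 k^3 l$, giving the required $4 k^3 l$ chord vertices. The main obstacle is to verify the three minor constructions carefully (ensuring each yields precisely a $\Cat_k$ or $\Ap_l$ minor after the contractions and deletions) and to shepherd the constants through the three-way Erd\H{o}s--Szekeres so that the polynomial bound fits within the advertised $4 k^3 l$.
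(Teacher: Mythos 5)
Your decomposition into \emph{external} vertices (adjacent to a vertex off $P$) and \emph{chord} vertices (adjacent to a non-neighbour on $P$) is exactly the paper's split, and your external bound of $(k-1)(l-1)<kl$ is fine (the paper reaches the same bound by greedily building $k$ independent $P$-to-off-$P$ edges rather than by pigeonhole, but both work). The chord bound is where your plan does not close. With the three forbidden patterns you propose --- $k$ pairwise disjoint ($\Rightarrow\Cat_k$), $l$ pairwise nested ($\Rightarrow\Ap_l$), $l$ pairwise crossing ($\Rightarrow\Ap_l$) --- the Dilworth/Mirsky plus Erd\H{o}s--Szekeres argument bounds the number of chords by $(k-1)(l-1)^2$ and hence the chord vertices by $2(k-1)(l-1)^2$. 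This is \emph{not} bounded by $4k^3l$: for fixed $k$ and large $l$ it grows like $l^2$ while $4k^3l$ grows like $l$. So the assertion that ``a loose version of this bound fits within $2k^3l$'' is false, and the lemma as stated is not proved. The shape mismatch comes from the minor you choose to extract: by only deriving $\Ap_l$ from nested and crossing families you forbid size $l$, whereas the paper derives $\Cat_k$ from all three patterns (a $k$-nested chain and a $k$-disjoint family each give a caterpillar directly, and $k$ chords crossing a single chord give one after rerouting through that chord), so every forbidden size is $k$ and the resulting polynomial is $O(k^3)$ times a single factor of $l$. Your argument would need to be supplemented with a nested-gives-$\Cat_k$ extraction (and a crossing-gives-$\Cat_k$ one) so that the forbidden nested/crossing sizes become $\min(k,l)$ rather than $l$; as written it does not yield the stated constant.

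There is a second, more structural gap: your interval trichotomy treats chords as generic intervals, but chords can share endpoints, and this matters for the minor extractions. If $l$ nested chords all share their left endpoint, your contraction of the right endpoints into a single vertex $R$ produces only one apex-neighbour, not $l$, so you do not get $\Ap_l$ that way. (That configuration does force $\Ap_l$, but via the shared vertex being adjacent to $l$ vertices on one side of itself --- which is precisely the separate degree bound the paper proves first: every vertex of $P$ is adjacent to fewer than $2l$ other vertices of $P$.) The paper uses that degree bound to first extract a subfamily of pairwise vertex-disjoint chords before running any Ramsey-type argument on intervals. Your sketch omits both the degree bound on $P$-vertices and the vertex-disjointness extraction, so the nested and disjoint cases are not cleanly reduced to the interval picture. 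Patching this in reintroduces an extra factor of $O(l)$, which only worsens the constant problem above.
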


\begin{proof}
\ite We first prove that \emph{any vertex not in $P$ is adjacent
to less than $l$ vertices of $P$} and \emph{any vertex in $P$ is
adjacent to less than $2l$ vertices of $P$}. Clearly, if $G$
contains a vertex $v$ not in $P$ and adjacent to $l$ vertices $P$,
then $G$ contains $\Ap_l$ as a minor. Suppose now that there is a
vertex $v$ in $P$ adjacent to $2l$ other vertices of $P$. In this
case, $v$ is adjacent to at least $l$ vertices  in one of the
simple paths $P_1,~P_2$ obtained by removing the vertex $v$ from
the path~$P$. Hence $G$ contains $\Ap_l$ as a minor.

\ite We now prove that \emph{there are less than $kl$ vertices
in~$P$ adjacent to at least one  vertex not in~$P$}. We suppose
the contrary and we prove that there exist $k$ independent edges
$e_i=(u_i,v_i),~i=1\ldots k$ such that  $u_i$ is in $P$ and $v_i$
is not in $P$ (thereby implying that $\Cat_k$ is a minor of $G$).
Let  $r< k$ and let $e_i=(u_i,v_i),~i\leq r$ be independent edges
with $u_i\in P$ and $v_i\notin P$. The set of vertices in $P$
adjacent to some vertices not in $P$ but to none of the vertices
$v_i,i\leq r$ has size at least $kl-rl>0$ (this is because each of
the vertex $v_i$ is adjacent to less than $l$ vertices of $P$).
Thus, there exists an edge $e_{r+1}=(u_{r+1},v_{r+1})$ independent
of the edges  $e_i,i\leq r$ with $u_{r+1}\in P$ and $v_{r+1}\notin
P$. Thus, any set of $r<k$ independent edges with one endpoint in
$P$ and one endpoint not in $P$ can be increased.

\ite We now prove that  \emph{there are no more than $4k^3l$
vertices in $P$ adjacent to another vertex in~$P$ beside its 2
neighbors in $P$}. We suppose the contrary and we prove that
either $\Cat_k$ or $\Ap_l$ is a minor of $G$. Let $E_P$ be the set
of edges not in the path $P$ but joining 2 vertices of~$P$. We say
that two independent edges $e=(u,v)$ and $e'=(u',v')$ of $E_P$
\emph{cross} if the vertices $u,u',v,v'$ appear in this order
along the path $P$; this situation is represented in Figure
\ref{fig:crossing-edges}~(a).

\iten We first show that \emph{there is a subset $E_P'\subseteq
E_P$ of $k^3$ independent edges}. Let $S$ be any set of $r<k^3$
edges in $E_P$. The number of edges in $E_P$ sharing a vertex with
one of the edges in $S$ is at most $2r\times 2l<4k^3l$  (this is
because any vertex in $P$ is adjacent to less than $2l$ vertices
in $P$). Since $|E_P|\geq 4k^3l$, any set of independent edges in
$E_P$ of size less than $k^3$ can be increased.

\iten We now show that \emph{for any edge $e$ in $E_P'$ there are
at most $k$ edges of $E_P'$ crossing $e$}. Suppose that there is a
set $S\subseteq E_P'$ of $k$ edges crossing $e$. Let $P'$ be the
path obtained from $P\cup e$ by deleting the edges of $P$ that are
between the endpoints of $e$. The graph made of  $P'$ and the set
of edges $S$ contains the graph $\Cat_l$ as a minor which is
impossible.

\iten We now show that \emph{there exists a subset $E_P''\subseteq
E_P'$ of $k^2$ non-crossing edges}. Let $S$ be any set of $r<k^2$
edges in $E_P'$. By the preceding point, the number of edges in
$E_P'$ crossing one of the edges in $S$ is less than $rk<k^3$.
Since   $|E_P'|\geq k^3$, any set of non-crossing edges in~$E_P'$
of size less than $k^2$ can be increased.

\iten Lastly, we show that \emph{the graph $\Cat_k$ is a minor of
$G$}. We say that an edge $e=(u,v)$ of~$E_P''$ is \emph{inside}
another edge $e'=(u',v')$ if $u',u,v,v'$ appear in this order
along the path $P$; this situation is represented in Figure
\ref{fig:crossing-edges}~(b). We define the \emph{height} of the
edges in $E_P''$ as follows: the height of an edge $e$ is 1 plus
the maximum height of edges of $E_P''$ which are inside $e$ (the
height is 1 if there is no edge inside $e$). The height of edges
have been indicated in Figure~\ref{fig:crossing-edges}~(c).
Suppose that there is an edge of height $k$ in $E_P''$. Then there
is a set $S$ of $k$ edges  $e_1=(u_1,v_1),\ldots,e_k=(u_k,v_k)$
such that the vertices $u_1,u_2,\ldots,u_k,v_k,v_{k-1},\ldots,v_1$
appear in this order along $P$. In this case, the subgraph made of
$S$ and the subpath of $P$ between $u_1$ and $u_k$ contains
$\Cat_k$ as a minor. Suppose now that there is no edge of height
$k$. Since there are $k^2$ edges in $E_P''$, there is a integer
$i<k$ such that the number of edges of height $i$ is greater than
$k$.  Thus, there is a set $S$ of $k$ edges
$e_1=(u_1,v_1),\ldots,e_k=(u_k,v_k)$ such that the vertices
$u_1,v_1,u_2,v_2,\ldots,u_k,v_k$ appear in this order along $P$.
In this case, the subgraph obtained from $P\cup
\{e_1,\ldots,e_k\}$ by deleting an edge of $P$ between $u_i$ and
$v_i$ for all $i$ contains $\Cat_k$ as a minor.
\end{proof}

\begin{figure}[ht!]\begin{center} \input{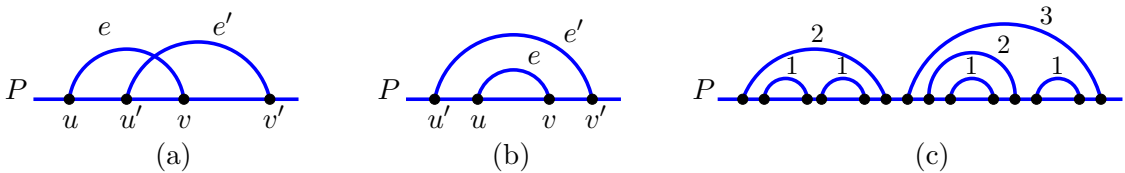}\caption{(a) Two crossing edges. (b) An edge inside
another.  (c) A set of non-crossing edges.}\label{fig:crossing-edges} \end{center}\end{figure}  \vspace{-.3cm}

For any integer $N$, we denote by $\mG^{N}_T$ the set of pairs
$(G,T)$ where $G$ is a graph and $T$ is a DFS spanning forest on
$G$ having height at most $N$ (the definition of \emph{DFS
spanning forest} was given just after Claim \ref{claim:path}).

\begin{lem}\label{lem:subdivide-tree}
For any graph $G$ in $\Ex(\Cat_k,\Ap_l)$, there exists a pair
$(G',T')$ in $\mG^{kl+4k^3l}_T$ such that $G$ is obtained from
$G'$ by subdividing some edges of $T$.
\end{lem}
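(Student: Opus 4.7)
The plan is to take any DFS spanning forest $T$ of $G$ and then simultaneously \emph{suppress} every internal tree vertex that carries no extra structure. Call a vertex $v$ \emph{suppressible} if $v$ has a parent in $T$, exactly one child in $T$, and $G$-degree equal to $2$; equivalently, $v$ is a $T$-internal vertex of $T$-degree $2$ whose two $G$-neighbors are its tree parent and its unique tree child. Let $T'$ and $G'$ be obtained from $T$ and $G$ by smoothing out all suppressible vertices simultaneously. By construction, $G$ is recovered from $G'$ by subdividing the corresponding edges of $T'$, so it only remains to check that $(G',T')$ lies in $\mG_T^{kl+4k^3l}$.

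Next I would verify that $T'$ is still a DFS spanning forest of $G'$. A suppressible vertex has no non-tree edge of $G$ incident to it, so the non-tree edges of $G'$ coincide with those of $G$. Each of them joins a vertex to one of its ancestors in $T$; and since suppression of $T$-internal vertices of $T$-degree $2$ preserves every ancestor-descendant relation between surviving vertices, each such edge still joins a vertex to one of its ancestors in $T'$.

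The real content is the height bound. Fix any root-to-leaf path of $T'$; it lifts to a root-to-leaf path $P$ in $T$, which is in particular a simple path of $G$. I claim that every surviving internal vertex $v$ of $P$ has $G$-degree strictly greater than $2$. Indeed, if $v$ had $G$-degree $2$, then its only $G$-neighbors would be forced to be its tree parent and the on-path tree child (any other tree child of $v$, or any non-tree edge at $v$, would produce a third $G$-neighbor); so $v$ would have exactly one child in $T$, and would therefore be suppressible, contradicting its survival. By Lemma~\ref{lem:degree2-on-path}, the number of vertices of $P$ of $G$-degree greater than $2$ is less than $kl+4k^3l$, hence the lifted path contains at most $kl+4k^3l-1$ surviving internal vertices, plus the two endpoints, for a total of at most $kl+4k^3l+1$ vertices and therefore at most $kl+4k^3l$ edges. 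This yields the required height bound. The proof is essentially bookkeeping; the only subtle step is the dichotomy just described, which is where one really uses the hypothesis that an internal vertex of $P$ of $G$-degree $2$ must be suppressible.
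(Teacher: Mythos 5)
Your proof is correct and follows the same route as the paper: take any DFS spanning forest $T$ of $G$, suppress exactly those non-root vertices of $G$-degree $2$ whose two incident edges are both tree edges (the paper phrases this as ``contracting the vertices $v\notin R$ of degree $2$ which are incident to $2$ edges of $T$''), observe that suppression preserves the back-edge characterization of a DFS forest, and then invoke Lemma~\ref{lem:degree2-on-path} along a lifted root-to-leaf path. Your bookkeeping for the height bound is if anything a touch more careful than the paper's, since you separate the two endpoints (which survive automatically but need not have degree greater than $2$) from the internal survivors, which you correctly show must all have $G$-degree greater than $2$ and hence number fewer than $kl+4k^3l$.
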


\begin{proof}
Let $G$ be a graph in $\Ex(\Cat_k,\Ap_l)$, let $T$ be a DFS
spanning forest of $G$ and let $R$ be the set of roots of $T$ (one
root for each connected components of~$G$). One \emph{contracts} a
vertex $v$ of degree $2$ by deleting $v$ and joining its two
neighbors by an edge. Let $G'$ and $T'$ be the graphs and trees
obtained from $G$ and $T$ by contracting the vertices $v\notin R$
of degree 2 which are incident to 2 edges of $T$.  We want to
prove that $(G',T')$ is in $\mG^{kl+4k^3l}_T$.

\ite Since $T$ is a DFS spanning forest of $G$, every edge of $G$
which is not in $T$ connects a vertex to one of its ancestors
\cite{Cormen:introduction-algo}. This property characterize the
DFS spanning forests and is preserved by the contraction of the
vertices of degree 2. Hence, $T'$ is a DFS spanning forest of
$G'$.

\ite By Lemma \ref{lem:degree2-on-path}, the number of vertices
which are not of degree $2$ along a path of $T$ from a root to a
leaf is less than $kl+4k^3l$. Thus, the height of $T'$ is at most
$kl+4k^3l$.
\end{proof} % \vspace{-.4cm}

We have already shown in the proof of Claim \ref{claim:path} that
the radius of convergence of the generating function $G^{N}_T(z)$
of the set $\mG^{N}_T$ is infinite. Moreover, the generating
function of the set of graphs that can be obtained from pairs
$(G',T')$ in $\mG^{N}_T$  by subdividing the tree $T'$ is bounded
(coefficient by coefficient) by $G^{N}_T(\frac{z}{1-z})$ (since a
forest $T'$ on a graph $G'$ of size $n$ has at most $n-1$ edges to
be subdivided). Thus, Lemma \ref{lem:subdivide-tree} implies that
the generating function of $\Ex(\Cat_k,\Ap_l)$ is bounded by
$G^{kl+4k^3l}_T(\frac{z}{1-z})$ which has radius of convergence 1.
Hence, the growth constant $\gamma(\Ex(\Cat_k,\Ap_l))$ is at most
1.
\end{proof}
This concludes the proof of Claim \ref{claim:gap} and Theorem
\ref{thm:growth-constants}.
\end{proof}

We now investigate the topological properties of the set $\Gamma$
and in particular its limit points. First note that $\Gamma$ is
countable
(as a consequence of the Minor Theorem of Robertson and Seymour \cite{Seymour:Graph-minors}). %We now study the the limit points of $\Gamma$

\begin{lem}\label{th:sep}
Let $H_1,H_2,\dots H_k$ be a family of 2-connected graphs, and let
$\mathcal{H} = {\rm Ex}(H_1,H_2,\dots H_k)$. If $G$ is a
2-connected graph in $\mathcal{H}$, then
 $\gamma(\mathcal{H}\cap \Ex(G)) < \gamma(\mathcal{H})$.

\end{lem}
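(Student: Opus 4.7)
Set $\mathcal{H}' := \mathcal{H} \cap \Ex(G)$. Since $G$ is $2$-connected and $G \in \mathcal{H}$, the excluded minors of $\mathcal{H}'$ are $\{H_1, \ldots, H_k, G\}$, all $2$-connected; hence both $\mathcal{H}$ and $\mathcal{H}'$ are addable, and both have growth constants by Proposition~\ref{th:existence}. The plan is to compare the two growth constants via the block decomposition. Let $B_\mathcal{H}(z)$ and $B_{\mathcal{H}'}(z)$ be the exponential generating functions of $2$-connected graphs (on at least two vertices) in $\mathcal{H}$ and $\mathcal{H}'$, and let $R_\mathcal{H}(z)$ and $R_{\mathcal{H}'}(z)$ be the EGFs of rooted connected graphs. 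The standard block decomposition for addable classes (Section~II.5 of \cite{Flajolet:analytic}, see also \cite{McDiarmid:growth-constant-planar-graphs}) gives the functional equation
$$R(z) \,=\, z\, \psi(R(z)), \qquad \psi(y) \,:=\, \exp\bigl(B'(y)\bigr),$$
and the growth constant $\gamma = 1/\rho(R)$ may be read off as $\gamma = \psi'(\tau)$, where $\tau$ is the unique positive solution of $\psi(\tau) = \tau\,\psi'(\tau)$; equivalently, $\rho(R) = \max_{y \geq 0}\, y/\psi(y) = \tau/\psi(\tau)$.

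The graph $G$, being $2$-connected on $g := |V(G)|$ vertices and lying in $\mathcal{H} \setminus \mathcal{H}'$, contributes at least $1/|\mathrm{Aut}(G)|$ to the coefficient of $z^g$ in $B_\mathcal{H} - B_{\mathcal{H}'}$, and all other coefficients of this difference are nonnegative. Hence $B_\mathcal{H}(y) > B_{\mathcal{H}'}(y)$, $B_\mathcal{H}'(y) > B_{\mathcal{H}'}'(y)$, and $\psi_\mathcal{H}(y) > \psi_{\mathcal{H}'}(y)$ for every $y > 0$ in the common disk of convergence (which contains the critical point $\tau_\mathcal{H}$ of $\mathcal{H}$, since $\rho(\psi_{\mathcal{H}'}) \geq \rho(\psi_\mathcal{H})$ by term\-wise domination). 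Writing $g_\mathcal{H}(y) := y/\psi_\mathcal{H}(y)$ and $g_{\mathcal{H}'}(y) := y/\psi_{\mathcal{H}'}(y)$, we deduce $g_\mathcal{H}(y) < g_{\mathcal{H}'}(y)$ on $(0, \rho(\psi_\mathcal{H}))$, and evaluating at $\tau_\mathcal{H}$ we obtain
$$\rho(R_\mathcal{H}) \,=\, g_\mathcal{H}(\tau_\mathcal{H}) \,<\, g_{\mathcal{H}'}(\tau_\mathcal{H}) \,\leq\, \max_y g_{\mathcal{H}'}(y) \,=\, \rho(R_{\mathcal{H}'}),$$
which gives $\gamma(\mathcal{H}) = 1/\rho(R_\mathcal{H}) > 1/\rho(R_{\mathcal{H}'}) = \gamma(\mathcal{H}')$, as required.

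The main obstacle is the identification of $\rho(R)$ with the maximum $\max_y y/\psi(y)$ and its attainment at an interior critical point $\tau$: this is the standard ``smooth implicit function schema'' of~\cite{Flajolet:analytic} and is known to hold for addable minor-closed classes with finite growth constant, a condition guaranteed for both $\mathcal{H}$ and $\mathcal{H}'$ by Theorem~\ref{th:small}. Once this is in hand, the proof reduces to the elementary monotonicity observation $g_\mathcal{H}(\tau_\mathcal{H}) < g_{\mathcal{H}'}(\tau_\mathcal{H})$ used above.
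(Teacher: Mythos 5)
Your approach is a genuine departure from the paper's. The paper's proof is probabilistic and very short: it invokes Theorem~4.1 of \cite{McDiarmid:growth-constant-planar-graphs} (the ``appearances'' theorem), which says that for an addable small class $\mathcal{H}$ a random graph in $\mathcal{H}_n$ contains any fixed $G\in\mathcal{H}$ as a subgraph with probability at least $1-e^{-\alpha n}$; hence $|\mathcal{G}_n|/|\mathcal{H}_n|\le e^{-\alpha n}$ for $\mathcal{G}=\mathcal{H}\cap\Ex(G)$, and taking $n$th roots gives $\gamma(\mathcal{G})\le e^{-\alpha}\gamma(\mathcal{H})$. You instead work through the block decomposition $R(z)=z\exp\bigl(B'(R(z))\bigr)$ and compare radii of convergence via the strict coefficientwise inequality $B_{\mathcal{H}}>B_{\mathcal{H}'}$.

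The generating-function route is clean \emph{if} the hypothesis you lean on at the end were available, but it is not. You assert that $\rho(R_{\mathcal{H}})$ equals $\tau_{\mathcal{H}}/\psi_{\mathcal{H}}(\tau_{\mathcal{H}})$ with the maximum of $y/\psi_{\mathcal{H}}(y)$ \emph{attained at an interior critical point} $\tau_{\mathcal{H}}<\rho(\psi_{\mathcal{H}})$, and claim this ``is known to hold for addable minor-closed classes with finite growth constant.'' To my knowledge this is not a theorem. The existence of such a $\tau$ is exactly the condition that $R$ has a square-root singularity, which in turn forces $g_n/(n\,g_{n-1})\to\gamma$; but Section~\ref{section:conclusion} of this very paper lists precisely that smoothness statement (``if every excluded minor is 2-connected then the class is smooth'') as an open conjecture, noting that smoothness is known only in the few cases where the generating function has been found explicitly. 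So your argument silently assumes a statement strictly stronger than an open conjecture stated in the paper. Theorem~\ref{th:small} and Proposition~\ref{th:existence} give smallness and the existence of $\gamma$, but neither gives the interior critical point.

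Without the interior maximum, your chain of inequalities degrades. In the ``subcritical'' alternative, $y\mapsto y/\psi_{\mathcal{H}}(y)$ is increasing all the way to $\rho(\psi_{\mathcal{H}})$ and $\rho(R_{\mathcal{H}})$ is the \emph{limit} $\lim_{y\to\rho(\psi_{\mathcal{H}})^-} y/\psi_{\mathcal{H}}(y)$, not a value attained at an interior point; passing the strict pointwise inequality $y/\psi_{\mathcal{H}}(y)<y/\psi_{\mathcal{H}'}(y)$ to the limit only yields $\rho(R_{\mathcal{H}})\le\rho(R_{\mathcal{H}'})$, i.e.\ $\gamma(\mathcal{H})\ge\gamma(\mathcal{H}')$, which is not what the lemma asserts. (This case can in fact be repaired by evaluating both sides at $y=\rho(\psi_{\mathcal{H}})$ and using Abel continuity together with $\rho(\psi_{\mathcal{H}'})\ge\rho(\psi_{\mathcal{H}})$, but your write-up does not do this and instead declares the subcritical case impossible.) The paper's probabilistic argument sidesteps the whole issue: it needs no information at all about the nature of the singularity of $R$.
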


\begin{proof}
The condition on 2-connectivity guarantees that the growth
constants exist. By Theorem 4.1 from
\cite{McDiarmid:growth-constant-planar-graphs}, the probability
that a random graph in $\mathcal{H}_n$ contains $G$ as a subgraph
is a least $1 - e^{-\alpha n}$ for some $\alpha>0$. Hence the
probability that a random graph in $\mathcal{H}_n$ does not
contain $G$ as a minor is at most $e^{-\alpha n}$. If we denote
$\mathcal{G} = \mathcal{H}\cap \Ex(G)$, then we have
$$
{|\mathcal{G}_n| \over |\mathcal{H}_n|} = {|\mathcal{G}_n| \over n!} {n! \over |\mathcal{H}_n|}
\le e^{-\alpha n}.
$$
Taking limits, this implies
$$
{\gamma(\mathcal{G}) \over \gamma(\mathcal{H})} \le \lim \left(e^{-\alpha n}\right)^{1/n} =
e^{-\alpha} < 1.
$$
\end{proof}

We recall that given a set $A$ of real numbers, $a$ is a
\emph{limit point} of $A$ if for every $\epsilon>0$ there exists
$x\in A-\{a\}$ such that $|a-x| < \epsilon$.

\begin{thm}\label{th:limit}
Let $H_1,\ldots,H_k$ be 2-connected graphs which are not cycles. Then, $\gamma=\gamma(\Ex(H_1,\ldots,H_k))$ is a limit point of $\Gamma$.
%Let $\mathcal{G}$ be a minor-closed class with $\gamma(\mathcal{G})= \gamma$. Assume  that all the forbidding minors of $\mathcal{G}$ are 2-connected and that $\mathcal{G}$ contains all cycles.
\end{thm}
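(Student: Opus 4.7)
The plan is to produce a sequence of minor-closed classes whose growth constants are strictly below $\gamma = \gamma(\mathcal{H})$ and converge to it. For each integer $m \geq 3$, set $\mathcal{H}^{(m)} = \mathcal{H} \cap \Ex(C_m)$, with $C_m$ the $m$-cycle. Every $H_i$ is 2-connected and not a cycle, hence has a vertex of degree at least $3$; since minors of $C_m$ have maximum degree at most $2$, no $H_i$ is a minor of $C_m$, so $C_m \in \mathcal{H}$. The excluded minors of $\mathcal{H}^{(m)}$ lie in $\{H_1, \ldots, H_k, C_m\}$ and are all 2-connected, so Proposition \ref{th:existence} guarantees the existence of $\gamma_m := \gamma(\mathcal{H}^{(m)})$. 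Applying Lemma \ref{th:sep} with $G = C_m$ then yields $\gamma_m < \gamma$ strictly.

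To prove $\gamma_m \to \gamma$, I would introduce the auxiliary subclass $\mathcal{H}^{(m)*}$ of graphs in $\mathcal{H}$ whose 2-connected blocks each have fewer than $m$ vertices. Minor operations cannot enlarge block sizes, so $\mathcal{H}^{(m)*}$ is minor-closed; its excluded minors lie among the $H_i$ or among the 2-connected graphs on exactly $m$ vertices, hence are all 2-connected, and $\gamma(\mathcal{H}^{(m)*})$ exists by Proposition \ref{th:existence}. Since any $C_m$-minor of a graph must sit inside a single 2-connected block, $\mathcal{H}^{(m)*} \subseteq \mathcal{H}^{(m)}$, giving the squeeze $\gamma(\mathcal{H}^{(m)*}) \leq \gamma_m \leq \gamma$; it thus suffices to show $\gamma(\mathcal{H}^{(m)*}) \to \gamma$.

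For this last convergence I would invoke the block-decomposition framework for addable minor-closed classes (see~\cite{McDiarmid:growth-constant-planar-graphs}). Let $B(z)$ and $C(z)$ denote the EGFs of, respectively, 2-connected and connected graphs in $\mathcal{H}$: block decomposition links them by an implicit analytic equation, and $\gamma = 1/\rho$ where $\rho$ is a branching-type singularity of $C(z)$ lying strictly inside the disk of convergence of $B(z)$. The class $\mathcal{H}^{(m)*}$ satisfies the analogous equation with $B$ replaced by its polynomial truncation $B^{(m)}$ at degree $m-1$. As $B^{(m)} \to B$ uniformly on compact subsets of the disk of convergence of $B$, a standard continuity argument for smooth implicit schemes forces the branching singularity of the truncated equation to converge to $\rho$, so $\gamma(\mathcal{H}^{(m)*}) \to \gamma$. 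The main technical obstacle is verifying that the analytic inversion scheme underlying the McDiarmid--Steger--Welsh growth-constant theory is stable under polynomial truncation of $B$; if needed, this could be supplemented by a direct combinatorial construction of block-trees in $\mathcal{H}^{(m)*}_n$ that realise $(\gamma - \varepsilon)^n n!$ for any $\varepsilon > 0$ and $m$ large enough.
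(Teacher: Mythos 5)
Your first half matches the paper exactly: the classes $\mathcal{H}^{(m)} = \mathcal{H} \cap \Ex(C_m)$, the appeal to Proposition~\ref{th:existence} for existence of $\gamma_m$, and the application of Lemma~\ref{th:sep} to get $\gamma_m < \gamma$ strictly. The divergence is in the convergence step $\gamma_m \to \gamma$, and there you have a genuine gap. Your plan --- introduce the auxiliary class $\mathcal{H}^{(m)*}$ of graphs with blocks of size $< m$, truncate the block generating function $B(z)$, and argue that the branching singularity of the implicit scheme $C'(z) = \exp(B'(zC'(z)))$ depends continuously on the truncation --- is precisely the hard analytic statement you would need to prove, and you acknowledge yourself that you do not prove it. Stability of a singular-inversion scheme under truncation of $B$ is not a black box in the McDiarmid theory; the singularity could in principle jump discontinuously, and ruling that out requires real work. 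As written, the argument does not close.

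The paper sidesteps all of this with a short, elementary observation that you are missing: since $C_m$ has $m$ vertices, a graph on fewer than $m$ vertices cannot contain $C_m$ as a minor, so $(\mathcal{H}^{(m)})_n = \mathcal{H}_n$ for every $n < m$. Write $f_n = g_n/(e^2 n!)$; by Theorem~3 of \cite{McDiarmid:growth-constant-planar-graphs} this sequence is supermultiplicative, so $\gamma = \lim (f_n)^{1/n} = \sup_n (f_n)^{1/n}$, and likewise for each $\mathcal{H}^{(m)}$. Given $\varepsilon > 0$, pick $N$ with $(f_N)^{1/N} \geq \gamma - \varepsilon$; then for all $m > N$ the coincidence of the classes at size $N$ gives $\gamma_m \geq (f_N)^{1/N} \geq \gamma - \varepsilon$. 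This yields $\lim_m \gamma_m \geq \gamma$ with no analytic continuity argument at all. The auxiliary class $\mathcal{H}^{(m)*}$ and the block-truncation machinery are an unnecessary detour; the point is that supermultiplicativity upgrades a finite-$n$ agreement of the counting sequences into a lower bound on the limiting growth constant.
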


\begin{proof}
For $k \ge3$, let $\mathcal{G}_k = \mathcal{G} \cap \Ex(C_k)$,
where $C_k$ is the cycle of size $k$. Because of Proposition
\ref{th:existence}, the class $\mathcal{G}_k$ has a growth
constant $\gamma_k$, and because of Lemma \ref{th:sep} the
$\gamma_k$ are strictly increasing and $\gamma_k < \gamma$ for all
$k$. It follows that $\gamma' = \lim_{k \to \infty} \gamma_k$
exists and $\gamma' \le \gamma$. In order to show equality we
proceed as follows.

Let $g_n = |\mathcal{G}_n|$ and let $g_{k,n} = |(\mathcal{G}_k)_n|$. Since $\gamma =
\lim_{n\to\infty}(g_n/n!)^{1/n}$, for all $\epsilon>0$ there exists $N$ such that for $n>N$ we
have
$$
\left(g_n/ n! \right)^{1/n} \geq \gamma -\epsilon.
$$
Now define $\displaystyle f_n=\frac{g_n}{e^2 n!}$ and
$\displaystyle f_{k,n}=\frac{g_{k,n}}{e^2n!}$. From \cite[Theorem
3]{McDiarmid:growth-constant-planar-graphs}, the sequence $f_n$ is
supermultiplicative and $\displaystyle \gamma=\lim_{n\to
\infty}\left(f_n\right)^{1/n}=\lim_{n\to
\infty}\left(g_n/n!\right)^{1/n}$ exists and equals $\sup_n
\left(f_n\right)^{1/n}$. Similarly, $\gamma_k=\lim_{n\to
\infty}\left(f_{k,n}\right)^{1/n}=\sup_n
\left(f_{k,n}\right)^{1/n}$.

But since a graph on less than $k$ vertices cannot contain $C_k$
as a minor, we have $g_{k,n} = g_n$ for  $k>n$. Equivalently,
$f_{k,n} = f_n$ for  $k>n$. Combining all this, we have
$$\gamma_k \geq \left(f_{k,n}\right)^{1/n} \geq \left(f_n\right)^{1/n}\geq \gamma -\epsilon$$
for $k>N$. This implies $\gamma' = \lim \gamma_k \ge \gamma$.
\end{proof}

Notice that Theorem \ref{th:limit}  applies to all the classes in
Table \ref{table:known-constants} starting at the class of
outerplanar graphs. However, it does not apply to the classes of
of forests. In this case we offer an independent proof based on
generating functions.

\begin{lem}
The number $e$ is a limit point of $\Gamma$.
\end{lem}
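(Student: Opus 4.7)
The plan is to exhibit a sequence of minor-closed classes whose growth constants approach $e$ but differ from it. For each integer $d \geq 3$ I would consider the class $\mathcal{F}_d = \Ex(K_3, K_{1,d})$, consisting of all forests that contain no $K_{1,d}$-minor. This is minor-closed and closed under disjoint union, so Fekete's lemma applied to the supermultiplicative sequence $|\mathcal{F}_d|_n/n!$ yields a growth constant $\gamma_d = \sup_n (|\mathcal{F}_d|_n/n!)^{1/n}$, automatically bounded above by $\gamma(\mathcal{F}) = e$.

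To show $\gamma_d \to e$, the key observation is that $K_{1,d}$ has $d+1$ vertices, so no graph on at most $d$ vertices can contain it as a minor; equivalently $(\mathcal{F}_d)_k = \mathcal{F}_k$ for every $k \leq d$. Combined with the Fekete identity, this gives $\gamma_d \geq (|\mathcal{F}|_d/d!)^{1/d}$, and the right-hand side tends to $e$ as $d \to \infty$; with the upper bound this forces $\gamma_d \to e$.

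The main obstacle is to prove the strict inequality $\gamma_d < e$, which is what guarantees infinitely many elements of $\Gamma$ distinct from $e$ converging to $e$. Every tree in $\mathcal{F}_d$ has maximum degree at most $d-1$ (otherwise $K_{1,d}$ appears as a subgraph, hence as a minor), so $\mathcal{F}_d$ is contained in the family of forests of maximum degree at most $d-1$. By standard analytic combinatorics, the growth constant of this family equals that of rooted labelled trees of out-degree at most $d-1$, whose exponential generating function $R(z)$ satisfies the algebraic equation $R(z) = z\sum_{j=0}^{d-1} R(z)^j/j!$. The critical estimate is that $\sum_{j=0}^{d-1} 1/j! < e$ for any finite $d$, so substituting $z = 1/e$ into the equation forces $R(1/e) < 1$ strictly; applying the implicit function theorem to $F(z,y) = y - z\sum_{j=0}^{d-1} y^j/j!$, whose partial $\partial F/\partial y$ does not vanish at $(1/e, R(1/e))$, shows that $R$ extends analytically past $z = 1/e$. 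By Pringsheim's theorem the radius of convergence of $R$ then exceeds $1/e$, so the corresponding growth constant is strictly less than $e$, and a fortiori $\gamma_d < e$. Combining everything, $(\gamma_d)_{d \geq 3}$ is a sequence of elements of $\Gamma$ lying strictly below $e$ and converging to $e$, establishing that $e$ is a limit point of $\Gamma$.
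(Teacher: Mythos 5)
Your construction does not work: the classes $\mathcal{F}_d=\Ex(K_3,K_{1,d})$ all have growth constant exactly $1$, so the sequence $(\gamma_d)$ is constant and does not approach~$e$. The reason is that a tree $T$ (on at least $3$ vertices) contains $K_{1,d}$ as a minor if and only if $T$ has at least $d$ leaves: if $T$ has $\ell\geq d$ leaves, contract the complement of the leaves to a single vertex to obtain $K_{1,\ell}$; conversely, if $B_0,B_1,\dots,B_d$ are branch sets of a $K_{1,d}$-model in $T$ then each $B_i$ ($i\geq 1$) lies in a distinct component of $T-B_0$, and each such component contributes a leaf. So $\mathcal{F}_d$ consists of forests whose components have fewer than $d$ leaves; every such tree is a subdivision of a skeleton with $O(d)$ vertices, and the number of these on $n$ labelled vertices is $n!$ times a polynomial in $n$. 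Equivalently, since both $\Cat_d$ and $\Ap_d$ contain $K_{1,d}$ as a minor, $\mathcal{F}_d\subseteq\Ex(\Cat_d,\Ap_d)$, and Claim~\ref{claim:gap} of the paper already forces $\gamma(\mathcal{F}_d)=1$.

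The precise error is the step ``Combined with the Fekete identity, this gives $\gamma_d\geq(|\mathcal{F}|_d/d!)^{1/d}$.'' This requires $\gamma_d=\sup_n\bigl(|(\mathcal{F}_d)_n|/(e^2 n!)\bigr)^{1/n}$, which in turn requires the supermultiplicativity of that sequence, and the relevant result (Theorem~3 of \cite{McDiarmid:growth-constant-planar-graphs}) needs the class to be addable, i.e.\ closed under adding a bridge between components. But $\mathcal{F}_d$ is \emph{not} bridge-addable: joining two long paths by an edge at interior vertices yields a tree on which contracting the new edge produces a degree-$4$ vertex, hence a $K_{1,4}$-minor. Note also that $K_{1,d}$ is not $2$-connected for $d\geq 2$, so Proposition~\ref{th:existence} does not apply either. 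In fact $\bigl(|(\mathcal{F}_d)_n|/n!\bigr)^{1/n}$ starts near $e$ for $n\leq d$ and then \emph{decreases} to $1$, which is exactly the opposite of supermultiplicative behaviour. By contrast, the paper's proof uses forests whose components are a path (of unbounded length) with bounded-height trees attached; retaining the unbounded path is what lets the growth constant tend to $e$ from below, while the explicit generating function $1/(1-F_k(z))$ makes the existence and the value of the growth constant transparent without any appeal to addability.

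Your upper-bound argument (coefficient domination by forests of maximum degree $\leq d-1$, whose tree generating function satisfies $R=z\sum_{j<d}R^j/j!$ and has radius of convergence strictly larger than $1/e$) is fine in itself, but it only delivers $\gamma_d<e$; the claimed lower bound is the part that fails.
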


\begin{proof}
Let $\mathcal{F}_k$ be the class of forests whose trees are made
of a path and rooted trees of height at most $k$ attached to
vertices of the path. Observe that the classes $\mathcal{F}_k$ are
minor-closed, that $\mathcal{F}_k\subset \mathcal{F}_{k+1}$, and
that $\cup_{k} \mathcal{F}_k=\mathcal{F}$, where $\mF$ is the
class of forests. We prove that $\gamma(\mF_k)$ is a strictly
increasing sequence tending to $e=\gamma(\mF)$.

Recall that the class $\mathcal{F}_k$ and the class
$\mathcal{T}_k$ of its connected members have the same growth
constant. Moreover, the class $\vec{\mathcal{T}}_k$ of trees with
a distinguished oriented path to which rooted trees of height at
most $k$ are attached has the same growth constant as
$\mathcal{T}_k$ (this is because there are only  $n(n-1)$ of
distinguishing and orienting a path in a tree of size $n$). The
generating function associated to $\vec{\mathcal{T}}_k$ is
$1/(1-F_k(z))$, where $F_k(z)$ of is the generating function of
rooted trees of height at most $k$. Hence,
$\gamma(\mathcal{F}_k)=\gamma(\vec{\mathcal{T}}_k)$ is the inverse
of the unique positive root $\rho_k$ of $F_k(\rho_k) = 1$.

Recall that the generating functions $F_k$ are obtained as
follows; see Section III.8.2 in \cite{Flajolet:analytic}).

$$
F_0(z) = z; \qquad F_{k+1}(z) = ze^{F_k(z)} \quad \hbox{for $k>0$}.
$$
It is easy to check that the roots $\rho_k$ of $F_k(\rho_k) = 1$
are strictly decreasing.
%and that $\rho_k > 1/e$ for all $k$.It remain to prove that
Recall that the generating function $F(z)$ of rooted trees has a
singularity at $1/e$ and that $F(1/e)=1$
(see~\cite{Flajolet:analytic}). Moreover, for all $n$, $0\leq
[z^n]F_k(z)  \leq [z^n]F(z)$ and $\lim_{k\to \infty}
[z^n]F_k(z)=[z^n]F(z)$, thus $\lim_{k\to \infty} F_k(1/e) = F(1/e)
=1$. Furthermore, the functions $F_k(z)$ are convex and
$F_k'(1/e)\geq 1$ (since the coefficients of $F_k$ are positive
and $[z^1]F_k(z)=1$). Thus, $F_k(z)>F_k(1/e)+(z-1/e)$ which
implies   $1/e\leq \rho_k \leq 1/e+(F_k(1/e)-F(1/e))$. Thus, the
sequence $\rho_k$ tends to $1/e$ and the growth constants
$\gamma(\mathcal{F}_k)=1/\rho_k$ tend to $e$.
\end{proof}

\paragraph{Remark.} The number $\nu \approx 2.24$, which is the inverse of the smallest positive
root of $z\exp(z/(1-z))=1$, can be shown to be a limit point of $\Gamma$ by similar methods. It is
the smallest number which we know to be a limit point of $\Gamma$. It is the growth constant of
the family whose connected components are made of a path $P$ and any number of paths of any
length attached to the vertices of $P$.

\paragraph{Remark.} All our examples of limit points in $\Gamma$ come from
strictly increasing sequences of growth constants that converge to
another growth constant. Is it possible to have an infinite
strictly decreasing sequences of growth constants? As we see now,
this is related to a classical problem. A quasi-ordering is a
reflexive and transitive relation. A quasi-ordering $\le$ in $X$
is a \emph{well-quasi ordering} if for every infinite sequence
$x_1,x_2,\dots$ in $X$ there exist $i<j$ such that $x_i \le x_j$.
Now consider the set $X$ of minor-closed classes of graphs ordered
by inclusion. It is an open problem whether this is a well-quasi
ordering \cite{Diestel:wqo-minors}. Assuming this is the case, it
is clear that an infinite decreasing sequence $\gamma_1
> \gamma_2 > \cdots$ of growth constants cannot exist. For consider the corresponding sequence of
graph classes $\mathcal{G}_1 , \mathcal{G}_2,\dots$. For some $i<j$ we must have $\mathcal{G}_i
\subseteq \mathcal{G}_j$, but this implies $\gamma_i \le \gamma_j$.

\section{Conclusion: some open problems}\label{section:conclusion}
We close by  listing some  of  the open  questions which  have arisen  in  this  work.\\

1)  We know that  a class $\mathcal{G}$ has a growth constant
provided that all its excluded minor are 2-connected.
%We know that  provided each  $H_i$ of  $\mathcal{H}$ is 2-connected then $\mathcal{G}=\Ex(\mathcal{H})$ has a  growth constant.
The  condition that  the excluded-minors are 2-connected is  certainly not necessary as is  seen  by noting that the  apex family  of any class which has a growth constant  also  has  a growth constant. It is also easy to see that such an apex family  is also  minor-closed and that  at  least one  of  its excluded minors is disconnected.

Thus  our  first  conjecture is  that every minor-closed family
has a  growth constant, that is, $\lim
\left(\frac{g_n}{n!}\right)^{1/n}$ exists for every minor-closed
class $\mG$.

%We should mention that Balogh \emph{et al.} have exhibited hereditary classes of graphs such that $\liminf \left(\frac{g_n}{n!}\right)^{1/n}$ exists but $\limsup \left(\frac{g_n}{n!}\right)^{1/n}=\infty$. On the other hand the same authors have conjectured that monotone classes with a finite number of excluded subgraphs have a

2)  A  minor-closed class is  \emph{smooth} if $\lim
\frac{g_n}{ng_{n-1}}$ exists. It follows  that this limit must be
the  growth constant and  that a random member of $\mathcal{G}$
will  have  expected number of  isolated vertices converging to
$1/\gamma$. Our  second  conjecture is that  if every  excluded
minor of  a  minor-closed class is 2-connected then the  class is
smooth.

If  true, then it  would  follow  that  a  random  member of  the
class would  qualitatively   exhibit all the  Poisson type
behaviour exhibited by the  random  planar  graph. However proving
smoothness for  a  class seems  to  be  very  difficult and the
only  cases which  we  know  to  be  smooth are when  the
exponential generating  function has been  determined exactly.

3) We  have  shown that the intervals $(0,1)$ and $(1,\xi)$ are
"gaps" which contain no growth constant. We know of no other gap,
though if there is no infinite decreasing sequence of growth
constants they  must  exist. One  particular question which we
have  been unable to settle is whether $(\xi,2)$ is also a gap.
%there  is there any $\gamma \in (\xi,2)$?

4)  We   have   shown that for each nonnegative  integer  $k$,
$2^k$  is a growth  constant.  A natural question is whether any
other integer is a growth constant. More generally, is there any
algebraic number in $\Gamma$ besides the powers of 2?

5) All our results concern labelled graphs. In unlabelled setting,
the  most important  question to settle  is whether there is an
analogue of the theorem  of  Norine \emph{et al}. More precisely,
suppose $\mathcal{G}$ is a minor-closed class of  graphs and that
$u_n$ denotes the number of unlabelled members of $\mathcal{G}_n$.
Does there exist  a  finite $d$ such that    $u_n$ is bounded
above by $d^n$?

\paragraph{Aknowledgements.}
We are very grateful to Colin McDiarmid who suggested the
apex-construction, to Angelika Steger for useful discussions,  and
to Norbert Sauer and Paul Seymour for information on well quasi
orders.

\bibliography{biblio-growth-minor.bib}
\bibliographystyle{plain}

\end{document}